\newcommand{\pluseq}{\mathrel{+}=}
\newtheorem{theorem}{Theorem}[section]
\newtheorem{lemma}[theorem]{Lemma}
\newtheorem{proposition}[theorem]{Proposition}
\title{Interpolation on Symmetric Spaces via the Generalized Polar Decomposition}
\author[1]{Evan S. Gawlik\thanks{\textit{E-mail address:} \texttt{egawlik@ucsd.edu} (Corresponding author)}}
\author[1]{Melvin Leok\thanks{\textit{E-mail address:} \texttt{mleok@math.ucsd.edu}}}
\affil{Department of Mathematics, University of California, San Diego.}
\begin{document}

\date{}

\maketitle

\begin{abstract}
We construct interpolation operators for functions taking values in a symmetric space -- a smooth manifold with an inversion symmetry about every point.  Key to our construction is the observation that every symmetric space can be realized as a homogeneous space whose cosets have canonical representatives by virtue of the generalized polar decomposition -- a generalization of the well-known factorization of a real nonsingular matrix into the product of a symmetric positive-definite matrix times an orthogonal matrix.  By interpolating these canonical coset representatives, we derive a family of structure-preserving interpolation operators for symmetric space-valued functions.  As applications, we construct interpolation operators for the space of Lorentzian metrics, the space of symmetric positive-definite matrices, and the Grassmannian.  In the case of Lorentzian metrics, our interpolation operators provide a family of finite elements for numerical relativity that are frame-invariant and have signature which is guaranteed to be Lorentzian pointwise.  We illustrate their potential utility by interpolating the Schwarzschild metric numerically.
\end{abstract}

\section{Introduction}

Manifold-valued data and manifold-valued functions play an important role in a wide variety of applications, including mechanics~\cite{sander2010geodesic,demoures2015discrete,hall2014lie}, computer vision and graphics~\cite{hong2014geodesic,turaga2011statistical,gallivan2003efficient,chang2012feature,de2014discrete,jiang2015frame}, medical imaging~\cite{arsigny2006log}, and numerical relativity~\cite{arnold2000numerical}.  
 By their very nature, such applications demand that care be taken when performing computations that would otherwise be routine, such as averaging, interpolation, extrapolation, and the numerical solution of differential equations.  This paper constructs interpolation and averaging operators for functions taking values in a \emph{symmetric space} -- a smooth manifold with an inversion symmetry about every point.  Key to our construction is the observation that every symmetric space can be realized as a homogeneous space whose cosets have canonical representatives by virtue of the \emph{generalized polar decomposition} -- a generalization of the well-known factorization of a real nonsingular matrix into the product of a symmetric positive-definite matrix times an orthogonal matrix.  By interpolating these canonical coset representatives, we derive a family of structure-preserving interpolation operators for symmetric space-valued functions.

Our motivation for constructing such operators is best illustrated by example.  Among the most interesting scenarios in which symmetric space-valued functions play a role is numerical relativity. There, 
the dependent variable in Einstein's equations~--~the metric tensor~--~is a function taking values in the space $\mathcal{L}$ of Lorentzian metrics: bilinear, symmetric, nondegenerate $2$-tensors with signature $(3,1)$.  This space is neither a vector space nor a convex set.   Rather, it has the structure of a symmetric space.  As a consequence, the outputs of basic arithmetic operations on Lorentzian metrics such as averaging, interpolation, and extrapolation need not remain in $\mathcal{L}$.  This is undesirable for several reasons.  If the metric tensor field is to be discretized with finite elements, then a naive approach in which the components of the metric are discretized with piecewise polynomials may fail to produce a metric field with signature $(3,1)$ at all points in spacetime.  Perhaps an even more problematic possibility is that a numerical time integrator used to advance the metric forward in time (e.g., in a $3+1$ formulation of Einstein's equations) might produce metrics with invalid signature.  One of the aims of the present paper is to avert these potential dangers altogether by constructing a structure-preserving interpolation operator for Lorentzian metrics.  As will be shown, the interpolation operator we derive not only produces interpolants that everywhere belong to $\mathcal{L}$, but it is also frame-invariant: the interpolation operator we derive commutes with the action of the indefinite orthogonal group $O(1,3)$ on $\mathcal{L}$.  Furthermore, our interpolation operator commutes with inversion and interpolates the determinant of the metric tensor in a monotonic manner.

A more subtle example is the space $SPD(n)$ of symmetric positive definite $n \times n$ matrices.  This space forms a convex cone, so arithmetic averaging and linear interpolation trivially produce $SPD(n)$-valued results.  
Nevertheless, these operations fail to preserve other structures that are important in some applications. 
For instance, arithmetic averaging does not commute with matrix inversion, and the determinant of the arithmetic average need not be less than or equal to the maximum of the determinants of the data.  This may remedied by considering instead  the Riemannian mean (also known as the Karcher mean) of symmetric positive-definite matrices with respect to the canonical left-invariant Riemannian metric on $SPD(n)$~\cite{moakher2005differential,bhatia2013riemannian,karcher1977riemannian}.  The Riemannian mean cannot, in general, be expressed in closed form, but it can be computed iteratively and possesses a number of structure-preserving properties; see~\cite{bhatia2013riemannian} for details.  A less computationally expensive alternative, introduced by Arsigny and co-authors~\cite{arsigny2007geometric}, is to compute the mean of symmetric positive-definite matrices with respect to a log-Euclidean metric on $SPD(n)$.  The resulting averaging operator commutes with matrix inversion, prevents overestimation of the determinant, and commutes with similarity transformations that consist of an isometry plus scaling.  Both of these constructions turn out to be a special cases of the general theory presented in this paper.  In our derivation of the log-Euclidean mean, we give a clear geometric explanation of the vector space structure with which Arsigny and co-authors~\cite{arsigny2007geometric} endow $SPD(n)$ in their derivation, which turns out to be nothing more than a correspondence between a symmetric space ($SPD(n)$) and a Lie triple system~\cite{helgason1979differential}.

Another symmetric space which we address in this paper is the Grassmannian $Gr(p,n)$, which consists of all $p$-dimensional linear subspaces of $\mathbb{R}^n$.  Interpolation on the Grassmannian is a task of importance in a variety of contexts, including reduced-order modeling~\cite{amsallem2008interpolation,vetrano2012assessment} and computer vision~\cite{hong2014geodesic,turaga2011statistical,gallivan2003efficient,chang2012feature}.  Not surprisingly, this task has received much attention in the literature; see~\cite{begelfor2006affine,absil2004riemannian} and the references therein.  Our constructions in this paper recover some of the well-known interpolation schemes on the Grassmannian, including those that appear in~\cite{amsallem2008interpolation,chang2012feature,begelfor2006affine}.

There are connections between the present work and geodesic finite elements~\cite{grohs2015optimal,grohs2013quasi,sander2012geodesic,sander2015geodesic}, a family of conforming finite elements for functions taking values in a Riemannian manifold $M$.  In fact, we recover such elements as a special case in the theory below; see Section~\ref{sec:interp_generalizations}.  Since their evaluation amounts to the computation of a weighted Riemannian mean, geodesic finite elements and their derivatives can sometimes be expensive to compute.  One of the messages we hope to convey is that when $M$ is a symmetric space, this additional structure enables the construction of alternative interpolants that are less expensive to compute but still possess many of the desirable features of geodesic finite elements.

Our use of the generalized polar decomposition in this paper is inspired by a stream of research~\cite{munthe2001generalized,munthe2014symmetric,iserles2005efficient} that has, in recent years, cast a spotlight on the generalized polar decomposition's role in numerical analysis.  Much of our exposition and notation parallels that which appears in those papers, and we encourage the reader to look there for further insight.

\paragraph{Organization.}
This paper is organized as follows.  We begin in Section~\ref{sec:symspace} by reviewing symmetric spaces, Lie triple systems, and the generalized polar decomposition.  Then, in Section~\ref{sec:interp}, we exploit a correspondence between symmetric spaces and Lie triple systems to construct interpolation operators on symmetric spaces.  Finally, in Section~\ref{sec:applications}, we specialize these interpolation operators to three examples of symmetric spaces: the space of symmetric positive-definite matrices, the space of Lorentzian metrics, and the Grassmannian.  In the case of Lorentzian metrics, we illustrate the potential utility of these interpolation operators by interpolating the Schwarzschild metric numerically.

\section{Symmetric Spaces and the Generalized Polar Decomposition} \label{sec:symspace}

In this section, we review symmetric spaces, Lie triple systems, and the generalized polar decomposition.  We describe a well-known correspondence between symmetric spaces and Lie triple systems that will serve in Section~\ref{sec:interp} as a foundation for interpolating functions which take values in a symmetric space.  For further background material, we refer the reader to~\cite{helgason1979differential,munthe2001generalized,munthe2014symmetric}.

\subsection{Notation and Definitions} \label{sec:notation}

Let $G$ be a Lie group and let $\sigma : G \rightarrow G$ be an involutive automorphism.  That is, $\sigma \neq \mathrm{id}.$ is a bijection satisfying $\sigma(\sigma(g))=g$ and $\sigma(gh)=\sigma(g)\sigma(h)$ for every $g,h \in G$.  Denote by $G^\sigma$ the subgroup of $G$ consisting of fixed points of $\sigma$:
\[
G^\sigma = \{g \in G \mid \sigma(g)=g\}.
\]
Suppose that $G$ acts transitively on a smooth manifold $\mathcal{S}$ with a distinguished element $\eta \in \mathcal{S}$ whose stabilizer coincides with $G^\sigma$.  In other words,
\[
g \cdot \eta = \eta \iff \sigma(g)=g%, \quad \forall g \in G,
\]
where $g \cdot u$ denotes the action of $g \in G$ on an element $u \in \mathcal{S}$.  Then there is a bijective correspondence between elements of the homogeneous space $G/G^\sigma$ and elements of $\mathcal{S}$.  On the other hand, the cosets in $G/G^\sigma$ have canonical representatives by virtue of the \emph{generalized polar decomposition}~\cite{munthe2001generalized,munthe2014symmetric}.  This decomposition states that any $g \in G$ sufficiently close to the identity $e \in G$ can be written as a product
\begin{equation}\label{gpd}
g = pk, \quad p \in G_\sigma, \, k \in G^\sigma,
\end{equation}
where 
\[
G_\sigma = \{g \in G \mid \sigma(g) = g^{-1} \}.
\]
Moreover, this decomposition is locally unique~\cite[Theorem 3.1]{munthe2001generalized}.  As a consequence, there is a bijection between a neighborhood of the identity $e \in G_\sigma$ and a neighborhood of the coset $[e] \in G/G^\sigma$.  The space $G_\sigma$ -- which, unlike $G^\sigma$, is not a subgroup of $G$ -- is a symmetric space which is closed under a non-associative symmetric product $g \cdot h = gh^{-1} g$.  Its tangent space at the identity is the space 
\[
\mathfrak{p} = \{Z \in \mathfrak{g} \mid d\sigma(Z) = -Z\}.
\]
Here, $\mathfrak{g}$ denotes the Lie algebra of $G$, and $d\sigma : \mathfrak{g} \rightarrow \mathfrak{g}$ denotes the differential of $\sigma$ at $e$, which can be expressed in terms of the Lie group exponential map $\exp : \mathfrak{g} \rightarrow G$ via
\[
d\sigma(Z) = \left.\frac{d}{dt}\right|_{t=0} \sigma(\exp(tZ)).
\]
The space $\mathfrak{p}$, which is not a Lie subalgebra of $\mathfrak{g}$, has the structure of Lie triple system: it is a vector space closed under the double commutator $[ \cdot, [\cdot,\cdot]]$.  In contrast, the space
\[
\mathfrak{k} = \{Z \in \mathfrak{g} \mid d\sigma(Z) = Z\}
\]
is a subalgebra of $\mathfrak{g}$, as it is closed under the commutator $[\cdot,\cdot]$.  This subalgebra is none other than the Lie algebra of $G^\sigma$.  The generalized polar decomposition~(\ref{gpd}) has a manifestation at the Lie algebra level called the \emph{Cartan decomposition}, which decomposes $\mathfrak{g}$ as a direct sum
\begin{equation} \label{cartan}
\mathfrak{g} = \mathfrak{p} \oplus \mathfrak{k}.
\end{equation}
All of these observations lead to the conclusion that the following diagram commutes:

\begin{center}
\begin{tikzpicture}
\node (G) {$G$};
\node[below=1.5cm of G] (quotient) {$G/G^\sigma$}; 
\node[left=1.5cm of quotient] (Gsub) {$G_\sigma$};
\node[right=1.5cm of quotient] (X) {$\mathcal{S}$};
\node[left=1.5cm of Gsub] (p) {$\mathfrak{p}$};
\node[above=1.5cm of p] (g) {$\mathfrak{g}=\mathfrak{p}\oplus\mathfrak{k}$};
\node[above=1.5cm of g] (k) {$\mathfrak{k}$};
\node[right=1.5cm of k] (Gsup) {$G^\sigma$};
\draw[-latex]
(G) edge node (Gtoquotient) [left]{$\pi$} (quotient)
(G) edge node (GtoX) [above right]{$\varphi$} (X)
(quotient) edge node (quotienttoX) [below]{$\bar{\varphi}$} (X)
(Gsub) edge node (Gsubtoquotient) [below]{$\psi$} (quotient)
(Gsub) edge node (GsubtoG) [above left]{$\iota$} (G)
(p) edge node (ptoGsub) [below]{$\exp$} (Gsub)
(p) edge node (ptog) [left]{$\iota$} (g)
(g) edge node (gtoG) [above]{$\exp$} (G)
(k) edge node (ktog) [left]{$\iota$} (g)
(k) edge node (ktoGsup) [above]{$\mathrm{exp}$} (Gsup)
(Gsup) edge node (GsuptoG) [above right]{$\iota$} (G);
\end{tikzpicture}
\end{center}

In this diagram, we have used the letter $\iota$ to denote the canonical inclusion, $\pi : G \rightarrow G/G^\sigma$ the canonical projection, and $\varphi : G \rightarrow \mathcal{S}$ the map $\varphi(g) = g \cdot \eta$.  The maps $\psi$ and $\bar{\varphi}$ are defined by the condition that the diagram be commutative.

\subsection{Correspondence between Symmetric Spaces and Lie Triple Systems} \label{sec:correspondence}

An important feature of the diagram above is that the maps along its bottom row -- when restricted to suitable neighborhoods of the neutral elements $0 \in \mathfrak{p}$, $e \in G_\sigma$, $[e] \in G/G^\sigma$, and the distinguished element $\eta \in \mathcal{S}$ -- are diffeomorphisms~\cite{helgason1979differential}.  
In particular, the composition
\begin{equation} \label{bijection}
F = \bar{\varphi} \circ \psi \circ \exp
\end{equation}
(or, equivalently, $F = \varphi \circ \iota \circ \exp$) provides a diffeomorphism from a neighborhood of $0 \in \mathfrak{p}$ to a neighborhood of  $\eta \in \mathcal{S}$, given by
\[
F(P) = \exp(P) \cdot \eta
\]  
for $P \in \mathfrak{p}$.  The space $\mathfrak{p}$, being a vector space, offers a convenient space to perform computations (such as averaging, interpolation, extrapolation, and the numerical solution of differential equations) that might otherwise be unwieldy on the space $\mathcal{S}$.  This is analogous to the situation that arises when working with the Lie group $G$.  Often, computations on $G$ are more easily performed by mapping elements of $G$ to the Lie algebra $\mathfrak{g}$ via the inverse of the exponential map (or an approximation thereof), performing computations in $\mathfrak{g}$, and mapping the result back to $G$ via the exponential map (or an approximation thereof).

We remark that the analogy just drawn between computing on Lie groups and computing on symmetric spaces is in fact more than a mere resemblance; the latter situation directly generalizes the former.  Indeed, any Lie group $G$ can be realized as a symmetric space by considering the action of $G \times G$ on $G$ given by $(g,h) \cdot k = gkh^{-1}$.  The stabilizer of $e \in G$ is the diagonal of $G \times G$, which is precisely the subgroup fixed by the involution $\sigma(g,h) = (h,g)$.  In this setting, one finds that the map~(\ref{bijection}) takes $(X,-X) \in \mathfrak{g} \times \mathfrak{g}$ to $\exp(2X) \in G$.  This shows that, up to a trivial modification, the map~(\ref{bijection}) reduces to the Lie group exponential map if $\mathcal{S}$ happens to be a Lie group.

An additional feature of the map~(\ref{bijection}) is its equivariance with respect to the action of $G^\sigma$ on $\mathcal{S}$ and $\mathfrak{p}$.  Specifically, for $g \in G$, let $\mathrm{Ad}_g : \mathfrak{g} \rightarrow \mathfrak{g}$ denote the adjoint action of $G$ on $\mathfrak{g}$:
\[
\mathrm{Ad}_g Z = \left.\frac{d}{dt}\right|_{t=0} g \exp(tZ) g^{-1}.
\]
In a slight abuse of notation, we will write
\[
\mathrm{Ad}_g Z = g Z g^{-1}
\]
in this paper, bearing in mind that the above equality holds in the sense of matrix multiplication for any matrix group.
The following lemma shows that $F \circ \mathrm{Ad}_g \big|_{\mathfrak{p}}  = g \cdot F$ for every $g \in G^\sigma$.  Note that this statement makes implicit use of the (easily verifiable) fact that $\mathrm{Ad}_g$ leaves $\mathfrak{p}$ invariant when $g \in G^\sigma$; that is, $gPg^{-1} \in \mathfrak{p}$ for every $g \in G^\sigma$ and every $P \in \mathfrak{p}$.

\begin{lemma} \label{lemma:equivariance}
For every $P \in \mathfrak{p}$ and every $g \in G^\sigma$,
\[
g \cdot F(P) = F(gPg^{-1}).
\]
\end{lemma}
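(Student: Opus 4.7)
The plan is to prove the equivariance identity by a direct computation, chaining together three standard ingredients: (i) the explicit formula $F(P) = \exp(P)\cdot\eta$ derived just above the statement, (ii) the naturality of the exponential map under conjugation, namely $\exp(gPg^{-1}) = g\exp(P)g^{-1}$ for any $g\in G$ and $P\in\mathfrak{g}$, and (iii) the defining property of the stabilizer, that $G^\sigma$ fixes $\eta$.

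Concretely, I would start from the right-hand side. Since $g\in G^\sigma$, the earlier observation that $\mathrm{Ad}_g$ preserves $\mathfrak{p}$ (stated in the paragraph preceding the lemma) ensures $gPg^{-1}\in\mathfrak{p}$, so $F(gPg^{-1})$ is well-defined. By the formula for $F$,
\[
F(gPg^{-1}) = \exp(gPg^{-1})\cdot\eta.
\]
Applying naturality of $\exp$ turns this into $(g\exp(P)g^{-1})\cdot\eta$. Using that the action is a group action, this equals $g\cdot(\exp(P)\cdot(g^{-1}\cdot\eta))$. Now $g\in G^\sigma$ implies $g^{-1}\in G^\sigma$ (since $G^\sigma$ is a subgroup, being the fixed-point set of an automorphism), so the stabilizer property gives $g^{-1}\cdot\eta = \eta$. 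The expression collapses to $g\cdot(\exp(P)\cdot\eta) = g\cdot F(P)$, which is exactly the left-hand side.

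There is essentially no main obstacle: the statement is a textbook equivariance identity, and every ingredient used is already in place in the excerpt or is a standard Lie-theoretic fact. The only point worth flagging explicitly in the write-up is that $\mathrm{Ad}_g\mathfrak{p}\subseteq\mathfrak{p}$ for $g\in G^\sigma$, so that both sides of the claimed equality live in the domain/codomain of $F$; this can be verified by noting $d\sigma(gPg^{-1})=\sigma(g)\,d\sigma(P)\,\sigma(g)^{-1} = g(-P)g^{-1} = -gPg^{-1}$, using $\sigma(g)=g$ and the defining property of $\mathfrak{p}$. With that brief remark in place, the proof reduces to the three-line chain of equalities above.
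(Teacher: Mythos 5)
Your proof is correct and follows essentially the same route as the paper: express $F(gPg^{-1})=\exp(gPg^{-1})\cdot\eta$, use naturality of $\exp$ under $\mathrm{Ad}_g$, and invoke $g^{-1}\cdot\eta=\eta$ for $g\in G^\sigma$. The extra verification that $\mathrm{Ad}_g$ preserves $\mathfrak{p}$ is a nice touch but is also noted in the paper just before the lemma, so there is no substantive difference.
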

\begin{proof}
Note that $g \in G^\sigma$ implies $g^{-1} \in G^\sigma$, so $g^{-1} \cdot \eta = \eta$.  Hence, since the adjoint action commutes with exponentiation,
\begin{align*}
F(gPg^{-1})
&= \exp(gPg^{-1}) \cdot \eta \\
&= g\exp(P) g^{-1} \cdot \eta \\
&= g\exp(P) \cdot \eta \\
&= g \cdot F(P).
\end{align*}
\end{proof}

We finish this section by remarking that if $\mathcal{S}$ is a Riemannian manifold, then $\sigma$ induces a family of geodesic symmetries on $\mathcal{S}$ as follows.  Define $s_\eta : \mathcal{S} \rightarrow \mathcal{S}$ by setting
\[
s_\eta(g \cdot \eta) = \sigma(g) \cdot \eta.
\] 
for each $g \in G$.  Note that $s_\eta$ is a well-defined isometry that fixes $\eta$ and has differential equal to minus the identity.
Furthermore, by definition,  the following diagram commutes:
\begin{center}
\begin{tikzpicture}
\node (p) {$\mathfrak{p}$};
\node[below=1.5cm of G] (p2) {$\mathfrak{p}$}; 
\node[right=1.5cm of p] (G) {$G$};
\node[right=1.5cm of p2] (G2) {$G$};
\node[right=1.5cm of G] (S) {$\mathcal{S}$};
\node[right=1.5cm of G2] (S2) {$\mathcal{S}$};
\draw[-latex]
(p) edge node (ptoG) [above]{$\exp$} (G)
(p2) edge node (p2toG2) [below]{$\exp$} (G2)
(G) edge node (GtoS) [above]{$\varphi$} (S)
(G2) edge node (G2toS2) [below]{$\varphi$} (S2)
(p) edge node (ptop2) [left]{$d\sigma$} (p2)
(G) edge node (GtoG2) [left]{$\sigma$} (G2)
(S) edge node (StoS2) [right]{$s_\eta$} (S2);
\end{tikzpicture}
\end{center}
Written another way,
\begin{equation} \label{tauF}
s_\eta(F(P)) = F(-P)
\end{equation}
for every $P \in \mathfrak{p}$.

In a similar manner, a geodesic symmetry at each point $h \cdot \eta \in \mathcal{S}$ can be defined via
\[
s_{h \cdot \eta} (g \cdot \eta) = h \cdot s_\eta(h^{-1} g \cdot \eta) = h \sigma(h^{-1}g) \cdot \eta.
\]
For every such $h \in G$, the map $s_ {h \cdot \eta}$ is an isometry, showing that $\mathcal{S}$ is a symmetric space.

\subsection{Generalizations}  \label{sec:correspondence_generalizations}

The construction above can be generalized by replacing the exponential map in~(\ref{bijection}) with a different local diffeomorphism.  One example is given by fixing an element $\bar{g} \in G$ and replacing $\exp : \mathfrak{p} \rightarrow G_\sigma$ in~(\ref{bijection}) with the map
\begin{equation} \label{generalexp}
P \mapsto \psi^{-1}\left([\bar{g} \exp(P)]\right).
\end{equation}
The output of this map is nothing more than the factor $p$ in the generalized polar decomposition $\bar{g}\exp(P)=pk$, $p \in G_\sigma$, $k \in G^\sigma$. The map~(\ref{bijection}) then becomes 
\begin{equation} \label{generalbijection}
F_{\bar{g}}(P) = \bar{g}\exp(P) \cdot \eta.
\end{equation}
This generalization of~(\ref{bijection}) has the property that it provides a diffeomorphism between a neighborhood of $0 \in \mathfrak{p}$ and a neighborhood of $\bar{g} \cdot \eta \in \mathcal{S}$ rather than $\eta$.  Note that when $\bar{g}=e$ (the identity element), this map coincides with~(\ref{bijection}).  A calculation similar to the proof of Lemma~\ref{lemma:equivariance} shows that the map $F_{\bar{g}}$ is $G^\sigma$-equivariant, in the sense that
\begin{equation} \label{generalequivariance}
F_{h\bar{g}h^{-1}}(hPh^{-1}) = h \cdot F_{\bar{g}}(P)
\end{equation}
for every $h \in G^\sigma$ and every $P \in \mathfrak{p}$.  Furthermore,
\begin{equation} \label{general_tauF}
s_{\bar{g} \cdot \eta} (F_{\bar{g}}(P)) = F_{\bar{g}}(-P)
\end{equation}
for every $P \in \mathfrak{p}$.  These identities are summarized in the following pair of diagrams, the first of which commutes for every $h \in G^\sigma$, and the second of which commutes for every $\bar{g} \in G$.
\begin{center}
\begin{tikzpicture}
\node (p) {$G \times \mathfrak{p}$};
\node[below=1.5cm of p] (p2) {$G \times \mathfrak{p}$}; 
\node[right=1.5cm of p] (S) {$\mathcal{S}$};
\node[right=1.5cm of p2] (S2) {$\mathcal{S}$};
\draw[-latex]
(p) edge node (ptoS) [above]{$f$} (S)
(p2) edge node (p2toS2) [below]{$f$} (S2)
(p) edge node (ptop2) [left]{$\Psi_h$} (p2)
(S) edge node (StoS2) [right]{$\Phi_h$} (S2);
\end{tikzpicture}
\hspace{1.5cm}
\begin{tikzpicture}
\node (p) {$\mathfrak{p}$};
\node[below=1.5cm of p] (p2) {$\mathfrak{p}$}; 
\node[right=1.5cm of p] (S) {$\mathcal{S}$};
\node[right=1.5cm of p2] (S2) {$\mathcal{S}$};
\draw[-latex]
(p) edge node (ptoS) [above]{$F_{\bar{g}}$} (S)
(p2) edge node (p2toS2) [below]{$F_{\bar{g}}$} (S2)
(p) edge node (ptop2) [left]{$d\sigma$} (p2)
(S) edge node (StoS2) [right]{$s_{\bar{g}\cdot\eta}$} (S2);
\end{tikzpicture}
\end{center}
Here, we have denoted $f(\bar{g},P)=F_{\bar{g}}(P)$, $\Psi_h(\bar{g},P) = (h\bar{g}h^{-1}, hPh^{-1})$, and $\Phi_h(u) = h \cdot u$.

More generally, one may consider replacing the exponential map in~(\ref{generalexp}) with any retraction $R : \mathfrak{g} \rightarrow G$~\cite{absil2009optimization}.  For instance, if $G$ is a quadratic matrix group, one may choose $R$ equal to the Cayley transform, or more generally, any diagonal Pad\'e approximant of the matrix exponential~\cite{celledoni2000exponential}.

\section{Interpolation on Symmetric Spaces} \label{sec:interp}

In this section, we exploit the correspondence between symmetric spaces and Lie triple systems discussed in Sections~\ref{sec:correspondence}-\ref{sec:correspondence_generalizations} in order to interpolate functions which take values in a symmetric space.

\subsection{A Structure-Preserving Interpolant}

Consider the task of interpolating $m$ elements $u_1,u_2,\dots,u_m \in \mathcal{S}$, which we will think of as the values of a smooth function $u : \Omega \rightarrow \mathcal{S}$ defined on a domain $\Omega \subset \mathbb{R}^d$, $d \ge 1$, at locations $x^{(1)},x^{(2)},\dots,x^{(m)} \in \Omega$.  Our goal is thus to construct a function $\mathcal{I}u : \Omega \rightarrow \mathcal{S}$ that satisfies $\mathcal{I}u(x^{(i)}) = u_i$, $i=1,2,\dots,m$, and has a desired level of regularity (e.g., continuity).  We assume in what follows that for each $x \in \Omega$, $u(x)$ belongs to the range of the map~(\ref{bijection}).  We may then interpolate $u_1,u_2,\dots,u_m$ by interpolating $F^{-1}(u_1), F^{-1}(u_2),\dots,F^{-1}(u_m) \in \mathfrak{p}$ and mapping the result back to $\mathcal{S}$ via $F$.  More precisely, set
\[
\mathcal{I}u(x) = F(\hat{\mathcal{I}}P(x)),
\]
where $P(x) = F^{-1}(u(x))$ and $\hat{\mathcal{I}}P : \Omega \rightarrow \mathfrak{p}$ is an interpolant of $F^{-1}(u_1), F^{-1}(u_2),\dots,F^{-1}(u_m)$.  Then $\mathcal{I}u$ interpolates the data while fulfilling the following important properties.

\begin{proposition} \label{prop:interp_equivariance}
Suppose that $\hat{\mathcal{I}}$ commutes with $\mathrm{Ad}_g$ for every $g \in G^\sigma$.  That is,
\[
\hat{\mathcal{I}}(g P g^{-1}) (x) = g \hat{\mathcal{I}} P(x) g^{-1}%, \quad \forall g \in G^\sigma
\]
for every $x \in \Omega$ and every $g \in G^\sigma$.
Then $\mathcal{I}$ is $G^\sigma$-equivariant.  That is,
\[
\mathcal{I} (g \cdot u)(x) = g \cdot \mathcal{I} u(x)%, \quad \forall g \in G^\sigma
\]
for every $x \in \Omega$ and every $g \in G^\sigma$ sufficiently close to the identity.
\end{proposition}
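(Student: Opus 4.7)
The plan is to chain together three ingredients: (i) the definition $\mathcal{I}u(x) = F(\hat{\mathcal{I}}P(x))$ with $P(x) = F^{-1}(u(x))$; (ii) Lemma~\ref{lemma:equivariance}, which says $F$ intertwines the $G^\sigma$-action on $\mathcal{S}$ with the adjoint action on $\mathfrak{p}$; and (iii) the hypothesized commutation of $\hat{\mathcal{I}}$ with $\mathrm{Ad}_g$. Conceptually, $\mathcal{I}$ is a composition $F \circ \hat{\mathcal{I}} \circ F^{-1}$ of three $G^\sigma$-equivariant operations, so $\mathcal{I}$ itself should be equivariant, and the task is merely to make this careful.

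First I would invert Lemma~\ref{lemma:equivariance} to get that, for $g \in G^\sigma$ and $v$ in the neighborhood of $\eta$ on which $F^{-1}$ is defined, $F^{-1}(g \cdot v) = g F^{-1}(v) g^{-1}$. Applied pointwise to $u$, this shows that the pullback of $g \cdot u$ under $F^{-1}$ is $g P g^{-1}$, whose values lie in $\mathfrak{p}$ by the invariance $\mathrm{Ad}_g \mathfrak{p} \subset \mathfrak{p}$ for $g \in G^\sigma$ noted just before Lemma~\ref{lemma:equivariance}. I would then substitute into the definition of $\mathcal{I}$, apply the hypothesis to pull $\mathrm{Ad}_g$ past $\hat{\mathcal{I}}$, and apply Lemma~\ref{lemma:equivariance} once more to pull $g$ past $F$, obtaining $\mathcal{I}(g \cdot u)(x) = F(\hat{\mathcal{I}}(gPg^{-1})(x)) = F(g \hat{\mathcal{I}}P(x) g^{-1}) = g \cdot \mathcal{I}u(x)$.

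The main subtlety — and the reason for the ``sufficiently close to the identity'' caveat — is that $F$ is only a local diffeomorphism. The construction already assumes $u(x)$ lies in the image of $F$ for every $x \in \Omega$, but in order to interpret $F^{-1}(g \cdot u(x))$ the shifted data $g \cdot u(x)$ must land in the same neighborhood. I would handle this by fixing an open neighborhood $U$ of $\eta \in \mathcal{S}$ on which $F^{-1}$ is defined, containing $u(\Omega)$, and observing that the set of $g \in G^\sigma$ with $g \cdot u(x) \in U$ for all relevant $x$ is an open neighborhood of $e$ by continuity of the action (and, where needed, a compactness argument restricting to the finite interpolation nodes $x^{(i)}$ or to a compact $\Omega$). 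Restricting to such $g$ legitimizes every application of $F^{-1}$ in the computation above, after which the calculation itself is essentially one line.
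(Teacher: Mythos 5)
Your proposal is correct and takes the same route the paper does: the paper's own proof simply says the claim is a straightforward consequence of Lemma~\ref{lemma:equivariance}, and your three-step computation (apply $F^{-1}$ and Lemma~\ref{lemma:equivariance} to turn $g \cdot u$ into $gPg^{-1}$, pull $\mathrm{Ad}_g$ past $\hat{\mathcal{I}}$ by hypothesis, apply Lemma~\ref{lemma:equivariance} again to pull $g$ back out of $F$) is exactly the intended argument, with the added benefit of making explicit why the ``sufficiently close to the identity'' restriction is needed.
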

\begin{proof}
The claim is a straightforward consequence of Lemma~\ref{lemma:equivariance}.
\end{proof}

\begin{proposition} \label{prop:involution_equivariance}
Suppose that $\hat{\mathcal{I}}$ commutes with $d\sigma\big|_{\mathfrak{p}}$.  That is, 
\[
\hat{\mathcal{I}}(-P)(x) = -\hat{\mathcal{I}}P(x)
\]
for every $x \in \Omega$.  Then $\mathcal{I}$ commutes with $s_\eta$.  That is,
\[
\mathcal{I} (s_\eta(u))(x) = s_\eta(\mathcal{I}u(x))
\]
for every $x \in \Omega$.
\end{proposition}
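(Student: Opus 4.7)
The plan is to mimic the proof of Proposition~\ref{prop:interp_equivariance}, using equation~(\ref{tauF}) in the role that Lemma~\ref{lemma:equivariance} played there. The key observation is that, by the defining property $\mathfrak{p} = \{Z \in \mathfrak{g} \mid d\sigma(Z) = -Z\}$, the restriction $d\sigma\big|_{\mathfrak{p}}$ is simply negation; so the hypothesis that $\hat{\mathcal{I}}$ commutes with $d\sigma\big|_{\mathfrak{p}}$ is exactly the statement $\hat{\mathcal{I}}(-P)(x) = -\hat{\mathcal{I}}P(x)$ already spelled out in the proposition.

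First I would compute $F^{-1}(s_\eta(u(x)))$ in terms of $P(x) = F^{-1}(u(x))$. Applying $F^{-1}$ to both sides of~(\ref{tauF}) gives $F^{-1}(s_\eta(F(P))) = -P$ for every $P \in \mathfrak{p}$ in the appropriate neighborhood, which rewrites as
\[
F^{-1}(s_\eta(u(x))) = -P(x).
\]
Unwinding the definition of $\mathcal{I}$ on $s_\eta \circ u$ then yields
\[
\mathcal{I}(s_\eta(u))(x) = F\bigl(\hat{\mathcal{I}}(-P)(x)\bigr).
\]

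Next I would apply the assumed compatibility of $\hat{\mathcal{I}}$ with negation to replace $\hat{\mathcal{I}}(-P)(x)$ by $-\hat{\mathcal{I}}P(x)$, and then use~(\ref{tauF}) one more time in the forward direction to conclude
\[
F\bigl(-\hat{\mathcal{I}}P(x)\bigr) = s_\eta\bigl(F(\hat{\mathcal{I}}P(x))\bigr) = s_\eta(\mathcal{I}u(x)).
\]
Chaining the three equalities gives the desired identity $\mathcal{I}(s_\eta(u))(x) = s_\eta(\mathcal{I}u(x))$.

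I do not foresee a genuine obstacle: the argument is a three-step diagram chase, and every step is already packaged for us (the diagram at the end of Section~\ref{sec:correspondence}, equation~(\ref{tauF}), and the hypothesis on $\hat{\mathcal{I}}$). The only item worth flagging is the implicit assumption that $u(x)$ and $s_\eta(u(x))$ both lie in the neighborhood on which $F$ is a diffeomorphism, and that $-P(x)$ lies in the neighborhood on which~(\ref{tauF}) was established; this is the same locality caveat already present in the statement and in Proposition~\ref{prop:interp_equivariance}, so no further work is required.
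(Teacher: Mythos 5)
Your proof is correct and is precisely the argument the paper has in mind: it is a detailed unpacking of the one-line proof ``The claim is a straightforward consequence of~(\ref{tauF}),'' applying~(\ref{tauF}) once to rewrite $F^{-1}(s_\eta(u(x)))$ as $-P(x)$, invoking the hypothesis on $\hat{\mathcal{I}}$, and applying~(\ref{tauF}) once more in the forward direction. Your locality caveat at the end is a reasonable observation but, as you note, is already implicit in the framework.
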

\begin{proof}
The claim is a straightforward consequence of~(\ref{tauF}).
\end{proof}

The preceding propositions apply, in particular, to any interpolant $\hat{\mathcal{I}}P : \Omega \rightarrow \mathfrak{p}$ of the form
\[
\hat{\mathcal{I}}P(x) = \sum_{i=1}^m \phi_i(x) P(x^{(i)})
\]
with scalar-valued shape functions $\phi_i : \Omega \rightarrow \mathbb{R}$, $i=1,2,\dots,m$, satisfying $\phi_i(x^{(j)}) = \delta_{ij}$, where $\delta_{ij}$ denotes the Kronecker delta.  By the propositions above, such an interpolant gives rise to a $G^\sigma$-equivariant interpolant $\mathcal{I}u : \Omega \rightarrow \mathcal{S}$ that commutes with $s_\eta$, given by
\begin{equation} \label{interp}
\mathcal{I}u(x) = F\left( \sum_{i=1}^m \phi_i(x) F^{-1}(u_i) \right).
\end{equation}
Written more explicitly,
\begin{equation} \label{interp_explicit1}
\mathcal{I}u(x) = \exp(P(x)) \cdot \eta,
\end{equation}
where
\begin{equation} \label{interp_explicit2}
P(x) = \sum_{i=1}^m \phi_i(x) F^{-1}(u_i).
\end{equation}

\subsection{Derivatives of the Interpolant}

The relations~(\ref{interp_explicit1}-\ref{interp_explicit2}) lead to an explicit formula for the derivatives of $\mathcal{I}u(x)$ with respect to each of the coordinate directions $x_j$, $j=1,2,\dots,d$.  Namely,
\begin{equation} \label{dIu}
\frac{\partial \mathcal{I}u}{\partial x_j}(x) = \mathrm{dexp}_{P(x)} \frac{\partial P}{\partial x_j}(x) \cdot \eta,
\end{equation}
where
\[
\frac{\partial P}{\partial x_j}(x) = \sum_{i=1}^m \frac{\partial\phi_i}{\partial x_j}(x) F^{-1}(u_i)
\]
and $\mathrm{dexp}_X Y$ denotes the differential of $\exp$ at $X \in \mathfrak{g}$ in the direction $Y \in \mathfrak{g}$.  

An explicit formula for $\mathrm{dexp}_X Y$ is the series
\[
\mathrm{dexp}_X Y = \exp(X) \sum_{k=0}^\infty \frac{(-1)^k}{(k+1)!} \mathrm{ad}_X^k Y,
\]
where $\mathrm{ad}_X Y = [X,Y]$ denotes the adjoint action of $\mathfrak{g}$ on itself.  In practice, one may truncate this series to numerically approximate $\mathrm{dexp}_X Y$. Note that while the exact value of $\mathrm{dexp}_X Y$ belongs to $\mathfrak{p}$ whenever $X,Y \in \mathfrak{p}$, this need not be true of its truncated approximation. However, this is of little import since any spurious $\mathfrak{k}$-components in such a truncation act trivially on $\eta$ in~(\ref{dIu}).

While the series expansion of $\mathrm{dexp}_X Y$ is valid on any finite-dimensional Lie group, more efficient methods are available for the computation of $\mathrm{dexp}_X Y$ when $G$ is a matrix group.  Arguably the simplest is to make use of the identity~\cite{mathias1996chain,higham2008functions}
\begin{equation} \label{blockexp}
\exp \begin{pmatrix} X & Y \\ 0 & X \end{pmatrix} = \begin{pmatrix} \exp(X) & \mathrm{dexp}_X Y \\ 0 & \exp(X) \end{pmatrix}.
\end{equation}
More sophisticated approaches with better numerical properties can be found in~\cite{al2009computing,higham2008functions}.

The identity~(\ref{blockexp}) can be leveraged to derive formulas for higher-order derivatives of $\mathcal{I}u(x)$, provided of course that $G$ is a matrix group.  As shown in Appendix~\ref{app:d2exp}, we have
\begin{equation} \label{d2Iu}
\frac{\partial^2 \mathcal{I} u}{\partial x_j \partial x_k}(x) = A \cdot \eta
\end{equation}
for each $j,k=1,2,\dots,d$, where $A$ denotes the $(1,4)$ block of the matrix
\[
\mathrm{exp}\begin{pmatrix} X & Y & Z & W \\ 0 & X & 0 & Z \\ 0 & 0 & X & Y \\ 0 & 0 & 0 & X \end{pmatrix},
\]
and $X = P(x)$, $Y = \frac{\partial P}{\partial x_j}(x)$, $Z = \frac{\partial P}{\partial x_k}(x)$, and $W = \frac{\partial^2 P}{\partial x_j \partial x_k}(x)$.

\subsection{Generalizations} \label{sec:interp_generalizations}

More generally, by fixing an element $\bar{g} \in G$ and adopting the map~(\ref{generalbijection}) instead of $F$, we obtain interpolation schemes of the form
\begin{equation} \label{generalinterp}
\mathcal{I}_{\bar{g}} u(x) = F_{\bar{g}}\left( \sum_{i=1}^m \phi_i(x) F_{\bar{g}}^{-1}(u_i) \right) = \bar{g} \exp\left( \sum_{i=1}^m \phi_i(x) F_{\bar{g}}^{-1}(u_i) \right) \cdot \eta.
\end{equation}
Here, we must of course assume that $u_i$ belongs to the range of $F_{\bar{g}}$ for each $i=1,2,\dots,m$.  This interpolant is therefore suitable for interpolating elements of $\mathcal{S}$ in a neighborhood of $\bar{g} \cdot \eta$.
Using the fact that $F_{hg}(P) = h \cdot F_{g}(P)$ for every $h, g \in G$ and every $P \in \mathfrak{p}$, one finds that this interpolant is equivariant under the action of the full group $G$, in the sense that
\begin{equation} \label{general_equivariance}
\mathcal{I}_{h\bar{g}} (h \cdot u)(x) = h \cdot \mathcal{I}_{\bar{g}} u(x)
\end{equation}
for every $x \in \Omega$ and every $h \in G$ sufficiently close to the identity.  On the other hand, the equivariance of $F_{\bar{g}}$ under the action of the subgroup $G^\sigma$ (recall~(\ref{generalequivariance})) implies that
\begin{equation} \label{general_equivariance_subgroup}
\mathcal{I}_{h\bar{g}h^{-1}} (h \cdot u)(x) = h \cdot \mathcal{I}_{\bar{g}} u(x)
\end{equation}
for every $x \in \Omega$ and every $h \in G^\sigma$ sufficiently close to the identity.  Comparing~(\ref{general_equivariance}) with~(\ref{general_equivariance_subgroup}) leads to the conclusion that this interpolant is invariant under post-multiplication of $\bar{g}$ by elements of $G^\sigma$; that is,
\begin{equation} \label{postmult}
\mathcal{I}_{\bar{g}h} u(x) = \mathcal{I}_{\bar{g}} u(x)%, \quad \forall h \in G^\sigma
\end{equation}
for every $x \in \Omega$ and every $h \in G^\sigma$ sufficiently close to the identity. 
Finally, as a consequence of~(\ref{general_tauF}),
\[
\mathcal{I}_{\bar{g}} (s_{\bar{g} \cdot \eta} (u))(x) = s_{\bar{g} \cdot \eta} (\mathcal{I}_{\bar{g}} u(x))
\]
for every $x \in \Omega$.

A natural choice for $\bar{g}$ is not immediately evident, but one heuristic is to select $j \in \{1,2,\dots,m\}$ and set $\bar{g}$ equal to a representative of the coset  $ \bar{\varphi}^{-1} (u_j)$.  
A more interesting option is to allow $\bar{g}$ to vary with $x$ and to define $\bar{g}(x)$ implicitly via
\begin{equation} \label{Riemannianmean}
\bar{g}(x) \cdot \eta = \mathcal{I}_{\bar{g}(x)} u(x).
\end{equation}
Equivalently,
\begin{equation} \label{Riemannianmean2}
\sum_{i=1}^m \phi_i(x) F_{\bar{g}(x)}^{-1}(u_i) = 0.
\end{equation}
In analogy with~(\ref{general_equivariance}), the interpolant $\mathcal{I}_{\bar{g}(x)} u(x)$ so defined is equivariant with respect to the action of the full group $G$, not merely the subgroup $G^\sigma$.  That is,
\begin{equation} \label{Gequivariant}
\mathcal{I}_{h\bar{g}(x)} (h \cdot u)(x) = h \cdot \mathcal{I}_{\bar{g}(x)} u(x)%, \quad \forall h \in G
\end{equation}
for every $x \in \Omega$ and every $h \in G$ sufficiently close to the identity.

A method for computing the interpolant $\mathcal{I}_{\bar{g}(x)} u(x)$ numerically is self-evident.  Namely, one performs the fixed-point iteration suggested by~(\ref{Riemannianmean}),  %, starting from an initial guess for $\bar{g}(x) \cdot \eta$.
as we explain in greater detail in Section~\ref{sec:applications}.

We show below that if $G$ is equipped with a left-invariant Riemannian metric for which the restrictions to $\mathfrak{p}$ of the Lie group exponential and Riemannian exponential maps coincide, then~(\ref{Riemannianmean2}) characterizes the coset $[\bar{g}(x)] \in G/G^\sigma$ as the weighted Riemannian mean of the cosets $[g_1],[g_2],\dots,[g_m] \in G/G^\sigma$, where $g_i \cdot \eta = u_i$, $i=1,2,\dots,m$.  The statement of this lemma makes use of the following observation.  Any left-invariant Riemannian metric on $G$ is uniquely defined by an inner product on $\mathfrak{g}$.  The restriction of this inner product to $\mathfrak{p}$ induces a left-invariant metric on $G/G^\sigma$ by virtue of the isomorphism $\mathfrak{p} \cong \mathfrak{g}/\mathfrak{k} = T_{[e]} (G/G^\sigma)$.

\begin{lemma} \label{lemma:Riemannian_mean}
Let $G$ be equipped with a left-invariant Riemmannian metric.  For each $g \in G$, denote by $\mathrm{Exp}_g : T_g G \rightarrow G$ the corresponding Riemannian exponential map.   Suppose that 
\begin{equation} \label{Expexp}
\left.\mathrm{Exp}_e\right|_{\mathfrak{p}} = \left.\exp\right|_{\mathfrak{p}}.
\end{equation}
If $\bar{g}(x) \in G$ is a solution of~(\ref{Riemannianmean}) (or, equivalently, (\ref{Riemannianmean2})), then $[\bar{g}(x)]\in G/G^\sigma$ locally minimizes
\begin{equation} \label{mindist}
\sum_{i=1}^m \phi_i(x) \, \mathrm{dist}\left([h],[g_i]\right)^2
\end{equation}
among all $[h] \in G/G^\sigma$, where $\mathrm{dist} : G/G^\sigma \times G/G^\sigma \rightarrow \mathbb{R}$ denotes the induced geodesic distance on $G/G^\sigma$, and $g_1,g_2,\dots,g_m \in G$ satisfy $g_i \cdot \eta = u_i$, $i=1,2,\dots,m$.
\end{lemma}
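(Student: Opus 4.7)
The plan is to reduce the equation~(\ref{Riemannianmean2}) to the classical characterization of a weighted Karcher/Fréchet mean as a critical point of the weighted sum-of-squared-distances functional on $G/G^\sigma$. On any Riemannian manifold one has $\nabla_p \tfrac{1}{2}\mathrm{dist}(p,q)^2 = -\log_p(q)$ inside the injectivity radius of $q$, where $\log_p$ denotes the Riemannian logarithm. Consequently, a critical point of the functional in~(\ref{mindist}) is precisely a point $[\bar{g}(x)] \in G/G^\sigma$ satisfying
\[
\sum_{i=1}^m \phi_i(x)\,\log_{[\bar{g}(x)]}([g_i]) = 0 \quad \text{in } T_{[\bar{g}(x)]}(G/G^\sigma),
\]
and inside the injectivity radius such a critical point is a local minimizer by the standard local convexity of the squared distance. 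My goal is therefore to identify this equation with~(\ref{Riemannianmean2}).

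The next step uses left-invariance. The inner product on $\mathfrak{g}$ descends through the isomorphism $\mathfrak{p}\cong T_{[e]}(G/G^\sigma)$ to a well-defined Riemannian metric on $G/G^\sigma$, and $\pi:G\to G/G^\sigma$ is a Riemannian submersion whose horizontal distribution at $e$ is $\mathfrak{p}$. Left translation $L_{\bar{g}(x)^{-1}}$ is an isometry of $G$ that descends to an isometry of $G/G^\sigma$ sending $[\bar{g}(x)]$ to $[e]$, so by naturality of the Riemannian logarithm under isometries,
\[
(L_{\bar{g}(x)^{-1}})_*\,\log_{[\bar{g}(x)]}([g_i]) = \log_{[e]}([\bar{g}(x)^{-1}g_i]),
\]
and the critical-point condition is equivalent to $\sum_i \phi_i(x)\,\log_{[e]}([\bar{g}(x)^{-1}g_i])=0$.

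The hypothesis $\mathrm{Exp}_e\big|_{\mathfrak{p}}=\exp\big|_{\mathfrak{p}}$ now lets me compute $\log_{[e]}$ explicitly. For $P\in\mathfrak{p}$, the curve $t\mapsto\exp(tP)$ is a horizontal Riemannian geodesic in $G$ by hypothesis, so by the standard theory of Riemannian submersions it projects to a Riemannian geodesic in $G/G^\sigma$, giving $\mathrm{Exp}^{G/G^\sigma}_{[e]}(P)=[\exp(P)]$. Inverting, $\log_{[e]}([k])$ is the unique $P\in\mathfrak{p}$ near $0$ with $\exp(P)\cdot\eta = k\cdot\eta$. Specializing to $k=\bar{g}(x)^{-1}g_i$ requires $\bar{g}(x)\exp(P)\cdot\eta=u_i$, i.e.\ $P=F_{\bar{g}(x)}^{-1}(u_i)$. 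Substituting this identification into the critical-point condition yields exactly~(\ref{Riemannianmean2}), and the lemma follows.

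The principal technical obstacle is justifying the submersion identity $\mathrm{Exp}^{G/G^\sigma}_{[e]}(P)=[\exp(P)]$ for $P\in\mathfrak{p}$, which requires $\mathfrak{p}$ to be the horizontal complement of $\mathfrak{k}$ (that is, $\mathfrak{p}\perp\mathfrak{k}$ under the defining inner product). The hypothesis $\mathrm{Exp}_e\big|_{\mathfrak{p}}=\exp\big|_{\mathfrak{p}}$ supplies exactly this geometric content, since it forces the one-parameter subgroups generated by $\mathfrak{p}$ to be genuine Riemannian geodesics of $G$; the rest then follows from standard submersion theory and the left-invariance of the metric.
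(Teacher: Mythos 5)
Your proof follows the same route as the paper's: both arguments identify $F_{\bar{g}(x)}^{-1}(u_i)$ with the Riemannian logarithm $\mathrm{Exp}_{[\bar{g}(x)]}^{-1}[g_i]$ on $G/G^\sigma$ by noting that the hypothesis makes $t\mapsto\exp(tP)$ a Riemannian geodesic of $G$, that this geodesic is horizontal, and hence that its projection is a geodesic of $G/G^\sigma$; both then invoke the Karcher characterization of the weighted mean as a zero of $\sum_i\phi_i(x)\,\mathrm{Exp}^{-1}_{[\cdot]}[g_i]$. You pass through $[e]$ by left translation and translate back, while the paper works directly at $\bar{g}(x)$ via $\mathrm{Exp}_{\bar{g}} = L_{\bar{g}}\circ\mathrm{Exp}_e$; this is only a cosmetic reorganization of the same argument.

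However, the justification you offer in your final paragraph for the horizontality of $\mathfrak{p}$ is incorrect. The hypothesis $\mathrm{Exp}_e\big|_{\mathfrak{p}} = \exp\big|_{\mathfrak{p}}$ does \emph{not} force $\mathfrak{p}\perp\mathfrak{k}$. Take $G$ abelian: then for \emph{any} left-invariant metric one has $\mathrm{Exp}_e = \exp$ on all of $\mathfrak{g}$, yet the inner product on $\mathfrak{g}$ need not be block-diagonal with respect to $\mathfrak{g}=\mathfrak{p}\oplus\mathfrak{k}$. More generally, $\exp(tP)$ being a Riemannian geodesic is equivalent to $\langle[P,Y],P\rangle=0$ for all $Y\in\mathfrak{g}$, which is a condition on $P$ alone and says nothing about $\langle P,K\rangle$ for $K\in\mathfrak{k}$. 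The orthogonality $\mathfrak{p}=\mathfrak{k}^{\perp}$ is a separate standing assumption, needed both so that $\pi\colon G\to G/G^\sigma$ is a Riemannian submersion for the quotient metric the paper constructs, and so that the $\mathfrak{p}$-directions are the horizontal ones. The paper leaves this assumption implicit (it asserts without comment that ``since $P_i\in\mathfrak{p}$, the tangent $c'(t)$ is everywhere horizontal''), so your argument is in the same position as the paper's, but your stronger claim that the hypothesis \emph{supplies} the orthogonality should be dropped or replaced by stating the orthogonality as an additional assumption.
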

\begin{proof}
For each $i=1,2,\dots,m$, let $P_i = F_{\bar{g}(x)}^{-1}(u_i)$, so that $\bar{g}(x) \exp(P_i) \cdot \eta = u_i$.  Then, as cosets in $G/G^\sigma$, we have $[\bar{g}(x) \exp(P_i)] = [g_i]$.  Define $c(t) = \bar{g}(x) \exp(tP_i)$ for $0 \le t \le 1$.  Since $\bar{g}\exp(tP_i)$ coincides with $\bar{g} \, \mathrm{Exp}_e(tP_i) = \mathrm{Exp}_{\bar{g}}(tP_i)$, the curve $c(t)$ is a geodesic on $G$.  Moreover, since $P_i \in \mathfrak{p}$, the tangent vector $c'(t)$ to this curve is everywhere horizontal.  Thus, $[c(t)]$ is a geodesic in $G/G^\sigma$ satisfying $[c(0)] = [\bar{g}(x)]$ and $[c(1)] = [\bar{g}(x) \exp(P_i)] = [g_i]$.  This shows that~(\ref{Riemannianmean2}) is equivalent to
\[
\sum_{i=1}^m \phi_i(x) \mathrm{Exp}_{[\bar{g}(x)]}^{-1} [g_i] = 0,
\]
where $\mathrm{Exp}_{[\bar{g}(x)]} : T_{[\bar{g}(x)]} (G/G^\sigma) \rightarrow G/G^\sigma$ denotes the Riemannian exponential map at $[\bar{g}(x)] \in G/G^\sigma$.  The latter equation is precisely the equation which characterizes minimizers of~(\ref{mindist}); see~\cite[Theorem 1.2]{karcher1977riemannian}.
\end{proof}

Lemma~\ref{lemma:Riemannian_mean} applies, in particular, if $G$ is equipped with a bi-invariant metric, since in that setting the Lie group exponential and Riemannian exponential maps coincide.  Notice that minimizers of~(\ref{mindist}) are precisely geodesic finite elements on $G/G^\sigma$, as described in~\cite{grohs2015optimal,grohs2013quasi,sander2012geodesic,sander2015geodesic}.  We refer the reader to those articles for further information about the approximation properties of these interpolants, as well as the convergence properties of iterative algorithms used to compute them.

\section{Applications} \label{sec:applications}

In this section, we apply the general theory above to several symmetric spaces, including the space of symmetric positive-definite matrices, the space of Lorentzian metrics, and the Grassmannian.  

\subsection{Symmetric Matrices with Fixed Signature} \label{sec:opq}

Let $n$ be a positive integer and let $p$ and $q$ be nonnegative integers satisfying $p+q=n$.  Consider the set 
\[
\mathcal{L} = \{L \in \mathbb{R}^{n \times n} \mid L=L^T, \, \det L \neq 0, \, \mathrm{signature}(L)=(q,p) \},
\]
where $\mathrm{signature}(L)$ denotes the signature of a nonsingular symmetric matrix $L$ -- an ordered pair indicating the number of positive and negative eigenvalues of $L$.
The general linear group $GL_n(\mathbb{R})$ acts transitively on $\mathcal{L}$ via the group action
\[
A \cdot L = ALA^T,
\]
where $A \in GL_n(\mathbb{R})$ and $L \in \mathcal{L}$.  Let $J = \mathrm{diag}(-1,\dots,-1,1,\dots,1)$ denote the diagonal $n \times n$ matrix with $p$ entries equal to $-1$ and $q$ entries equal to $1$.  The stabilizer of $J$ in $GL_n(\mathbb{R})$ is the indefinite orthogonal group
\[
O(p,q) = \{Q \in GL_n(\mathbb{R}) \mid Q J Q^T = J\}.
\]
Its elements are precisely those matrices that are fixed points of the involutive automorphism
\begin{align*}
\sigma : GL_n(\mathbb{R}) &\rightarrow GL_n(\mathbb{R}) \\
A &\mapsto J A^{-T} J,
\end{align*}
where $A^{-T}$ denotes the inverse transpose of a matrix $A \in GL_n(\mathbb{R})$.  In contrast, the set of matrices which are mapped by $\sigma$ to their inverses is
\[
Sym_J(n) = \{P \in GL_n(\mathbb{R}) \mid PJ = JP^T \}.
\]

The setting we have just described is an instance of the general theory presented in Section~\ref{sec:notation}, with $G = GL_n(\mathbb{R})$, $G^\sigma = O(p,q)$, $G_\sigma = Sym_J(n)$, $\mathcal{S}=\mathcal{L}$, and $\eta=J$.
%$\mathfrak{g}=\mathfrak{gl}_n(\mathbb{R}$, $\mathfrak{p} = \mathfrak{sym}_J(n)$, $\mathfrak{k}=\mathfrak{o}(p,q)$.
It follows that the generalized polar decomposition~(\ref{gpd}) of a matrix $A \in GL_n(\mathbb{R})$ (sufficiently close to the identity matrix $I$) with respect to $\sigma$ reads
\begin{equation} \label{gpd_opq}
A = PQ, \quad P \in Sym_J(n), \, Q \in O(p,q).
\end{equation}
The Cartan decomposition~(\ref{cartan}) decomposes an element $Z$ of the Lie algebra $\mathfrak{gl}_n(\mathbb{R}) = \mathbb{R}^{n \times n}$ of the general linear group as a sum
\[
Z = X+Y, \quad X \in \mathfrak{sym}_J(n), \, Y \in \mathfrak{o}(p,q),
\]
where
\[
\mathfrak{sym}_J(n) = \{ X \in \mathfrak{gl}_n(\mathbb{R}) \mid XJ = JX^T\}
\]
and
\[
\mathfrak{o}(p,q) = \{Y \in \mathfrak{gl}_n(\mathbb{R}) \mid YJ + JY^T = 0 \}
\]
denotes the Lie algebra of $O(p,q)$.

We can now write down the map $F : \mathfrak{sym}_J(n) \rightarrow \mathcal{L}$ defined abstractly in~(\ref{bijection}), which provides a diffeomorphism between a neighborhood of the zero matrix and a neighborhood of $J$.  By definition,
\begin{align} 
F(X) 
&= \exp(X) J \exp(X)^T \nonumber \\
&=  \exp(X) \exp(X) J \nonumber \\
&= \exp(2X) J, \label{F_opq}
\end{align}
where the second line follows from the fact that $\exp(X) \in Sym_J(n)$ whenever $X \in \mathfrak{sym}_J(n)$.

The inverse of $F$ can likewise be expressed in closed form.  This can be obtained directly by solving~(\ref{F_opq}) for $X$, but it is instructive to see how to derive the same result by inverting each of the maps appearing in the composition~(\ref{bijection}).  To start, note that explicit formulas for the matrices $P$ and $Q$ in the decomposition~(\ref{gpd_opq}) of a matrix $A \in GL_n(\mathbb{R})$ are known~\cite{higham2010canonical}.  Provided that $AJA^T J$ has no negative real eigenvalues, we have
\begin{align*}
P &= (AJA^T J)^{1/2}, \\
Q &= (AJA^T J)^{-1/2} A,
\end{align*}
where $B^{1/2}$ denotes the principal square root of a matrix $B$, and $B^{-1/2}$ denotes the inverse of $B^{1/2}$.  Thus, if $A \cdot J = AJA^T =  L \in \mathcal{L}$ and if $LJ$ has no negative real eigenvalues, then the factor $P$ in the polar decomposition~(\ref{gpd_opq}) of $A$ is given by
\[
P = (LJ)^{1/2}.
\]
It follows that for such a matrix $L$,
\begin{equation*} 
F^{-1}(L) = \log\left( (LJ)^{1/2} \right),
\end{equation*}
where $\log(B)$ denotes the principal logarithm of a matrix $B$.  We henceforth denote by $\mathcal{L}_*$ the set of matrices $L \in \mathcal{L}$ for which $LJ$ has no negative real eigenvalues, so that $F^{-1}(L)$ is well-defined for $L \in \mathcal{L}_*$.

The right-hand side of~(\ref{interp1_opq}) can be simplified using the following property of the matrix logarithm, whose proof can be found in~\cite[Theorem 11.2]{higham2008functions}: If a square matrix $B$ has no negative real eigenvalues, then
\[
\log(B^{1/2}) = \frac{1}{2}\log(B).
\]
From this it follows that
\begin{equation} \label{Finv_opq}
F^{-1}(L) = \frac{1}{2}\log\left( LJ \right)
\end{equation}
for $L \in \mathcal{L}_*$.  This formula, of course, could have been obtained directly from~(\ref{F_opq}), but we have chosen a more circuitous derivation to give a concrete illustration of the theory presented in Section~\ref{sec:symspace}. 

Substituting~(\ref{F_opq}) and~(\ref{Finv_opq}) into~(\ref{interp}) gives the following heuristic for interpolating a set of matrices $L_1,L_2,\dots,L_m \in \mathcal{L}_*$ -- thought of as the values of a smooth function $L : \Omega \rightarrow \mathcal{L}_*$ at points $x^{(1)},x^{(2)},\dots,x^{(m)}$ in a domain $\Omega$ -- at a point $x \in \Omega$:
\begin{equation} \label{interp1_opq}
\mathcal{I}L(x) = \exp\left( \sum_{i=1}^m \phi_i(x) \log\left( L_i J \right) \right) J.
\end{equation}
Here, as before, the functions $\phi_i : \Omega \rightarrow \mathbb{R}$, $i=1,2,\dots,m$, denote scalar-valued shape functions with the property that $\phi_i(x^{(j)}) = \delta_{ij}$.  Using the identity $J^{-1}=J$ together with the fact that the matrix exponential and logarithm commute with conjugation, the right-hand side of~(\ref{interp1_opq}) can be written equivalently as
\begin{equation} \label{interp2_opq}
\mathcal{I}L(x) = J \exp\left( \sum_{i=1}^m \phi_i(x) \log\left( J L_i \right) \right).
\end{equation}

In addition to satisfying $\mathcal{I}L(x) \in \mathcal{L}$ for every $x \in \Omega$, the interpolant so defined enjoys the following properties, which generalize the observations made in Theorems 3.13 and 4.2 of~\cite{arsigny2007geometric}.

\begin{lemma} \label{lemma:equivariance_opq}
Let $Q \in O(p,q)$.  If $\tilde{L}_i = Q L_i Q^T$, $i=1,2,\dots,m$, and if $Q$ is sufficiently close to the identity matrix, then
\[
\mathcal{I}\tilde{L}(x) = Q \, \mathcal{I}L(x) \, Q^T.
\]
for every $x \in \Omega$.
\end{lemma}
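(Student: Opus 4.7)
The plan is to reduce the lemma to an application of Proposition~\ref{prop:interp_equivariance}, which already establishes $G^\sigma$-equivariance of the abstract interpolant under the hypothesis that the underlying vector-space interpolant $\hat{\mathcal{I}}$ on $\mathfrak{p}$ commutes with $\mathrm{Ad}_g$ for every $g \in G^\sigma$. In the present setting we have identified $G^\sigma$ with $O(p,q)$ and the action of $G$ on $\mathcal{S}=\mathcal{L}$ with $A \cdot L = A L A^T$, so equivariance under $G^\sigma$ is precisely the statement of the lemma (with $\mathrm{Ad}_Q X = QXQ^{-1}$ and the action $Q \cdot L = Q L Q^T$). Thus the only thing to check is the hypothesis on $\hat{\mathcal{I}}$.

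The interpolant used here is the weighted sum
\[
\hat{\mathcal{I}} P(x) = \sum_{i=1}^m \phi_i(x) P(x^{(i)}),
\]
and since matrix conjugation $X \mapsto Q X Q^{-1}$ is linear, it passes through the sum:
\[
\hat{\mathcal{I}}(Q P Q^{-1})(x) = \sum_{i=1}^m \phi_i(x) Q P(x^{(i)}) Q^{-1} = Q \, \hat{\mathcal{I}} P(x) \, Q^{-1}.
\]
This is exactly the commutation with $\mathrm{Ad}_Q$ required by Proposition~\ref{prop:interp_equivariance}, so the lemma follows.

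For concreteness and to make the argument self-contained, I would also sketch the direct computational verification using the explicit formula~(\ref{Finv_opq}). The key observation is that $Q \in O(p,q)$ gives $Q J Q^T = J$, hence $Q^T J = J Q^{-1}$, which yields the conjugation identity
\[
(Q L Q^T) J = Q (L J) Q^{-1}.
\]
Taking the principal logarithm (which commutes with conjugation) produces $F^{-1}(Q L Q^T) = Q F^{-1}(L) Q^{-1}$, and then substituting into~(\ref{interp1_opq}) and using the same conjugation identity in the reverse direction, together with $\exp(Q X Q^{-1}) = Q \exp(X) Q^{-1}$, collapses everything to $Q \, \mathcal{I}L(x) \, Q^T$.

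The only subtlety, and hence the main (mild) obstacle, is the ``sufficiently close to the identity'' hypothesis: one needs $Q$ near $I$ so that $Q L_i Q^T \in \mathcal{L}_*$, i.e., $(Q L_i Q^T)J$ still has no negative real eigenvalues, so that $F^{-1}(\tilde L_i)$ is defined and the principal branches of the logarithm and square root used in~(\ref{Finv_opq}) behave continuously under the conjugation. This is automatic by continuity of the spectrum in $Q$, but it should be noted explicitly to justify the proximity assumption in the statement.
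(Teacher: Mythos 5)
Your proposal is correct and follows the same route as the paper, which proves the lemma by a one-line invocation of Proposition~\ref{prop:interp_equivariance}. The extra material you supply (verifying the $\mathrm{Ad}_g$-commutation hypothesis for the linear interpolant, the direct computation via $(QLQ^T)J = Q(LJ)Q^{-1}$, and the remark on why ``sufficiently close to the identity'' is needed to keep $QL_iQ^T$ in $\mathcal{L}_*$) is accurate and simply unpacks what the paper leaves implicit.
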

\begin{proof}
Apply Proposition~\ref{prop:interp_equivariance}.
\end{proof}

\begin{lemma}
If $\tilde{L}_i = JL_i^{-1}J$, $i=1,2,\dots,m$, then
\[
\mathcal{I}\tilde{L}(x) = J\left(\mathcal{I}L(x)\right)^{-1}J.
\]
for every $x \in \Omega$.
\end{lemma}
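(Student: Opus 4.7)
The cleanest route is to recognize this as an instance of Proposition~\ref{prop:involution_equivariance}. The plan is to show that the map $s_\eta$ of Section~\ref{sec:correspondence}, in this concrete setting with $\eta = J$, is precisely $L \mapsto J L^{-1} J$, and then invoke the proposition.

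First I would verify that the linear shape-function interpolant $\hat{\mathcal{I}}P(x)=\sum_i \phi_i(x) P(x^{(i)})$ commutes with $d\sigma\big|_{\mathfrak{p}}$. This is immediate and requires no calculation: by definition of $\mathfrak{p}$ we have $d\sigma\big|_{\mathfrak{p}} = -\mathrm{id}_{\mathfrak{p}}$, and any linear combination commutes with negation. So the hypothesis of Proposition~\ref{prop:involution_equivariance} holds.

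Next I would compute $s_\eta$. Writing an element of $\mathcal{L}$ as $L = g \cdot J = g J g^T$ with $g \in GL_n(\mathbb{R})$, the definition $s_\eta(g\cdot\eta) = \sigma(g)\cdot\eta$ together with the formula $\sigma(g) = J g^{-T} J$ gives $s_\eta(L) = (J g^{-T} J)\, J\, (J g^{-T} J)^T = J g^{-T} J g^{-1} J$, and this simplifies to $J L^{-1} J$ since $L^{-1} = g^{-T} J g^{-1}$ (using $J^{-1} = J$). With this identification, Proposition~\ref{prop:involution_equivariance} applied to the function $L$ with values $L_i$ and to the function $\tilde{L}$ with values $\tilde{L}_i = s_\eta(L_i)$ immediately yields $\mathcal{I}\tilde{L}(x) = s_\eta(\mathcal{I}L(x)) = J(\mathcal{I}L(x))^{-1} J$.

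I expect no real obstacle; the only spot requiring care is the bookkeeping with $J$ to identify $s_\eta$ correctly. As a sanity check, a direct computation confirms the result: using $F(X) = \exp(2X)J$ and $F^{-1}(L) = \tfrac{1}{2}\log(LJ)$, one sees that $F^{-1}(J L_i^{-1} J) = \tfrac{1}{2}\log(J L_i^{-1}) = \tfrac{1}{2}\log\bigl((L_i J)^{-1}\bigr) = -F^{-1}(L_i)$, since $(L_i J)(J L_i^{-1}) = I$. Thus, letting $X(x) = \sum_i \phi_i(x) F^{-1}(L_i)$, we have $\mathcal{I}\tilde{L}(x) = \exp(-2X(x)) J$, while $J(\mathcal{I}L(x))^{-1}J = J\bigl(\exp(2X(x))J\bigr)^{-1}J = J \cdot J\exp(-2X(x)) \cdot J = \exp(-2X(x)) J$, giving the equality directly. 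I would include this short computation as confirmation but would lead with the appeal to Proposition~\ref{prop:involution_equivariance}, since it makes clear that the property is structural rather than computational.
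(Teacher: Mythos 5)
Your proof is correct and follows the same route as the paper, which simply applies Proposition~\ref{prop:involution_equivariance} after noting that $s_\eta(L)=JL^{-1}J$. You additionally spell out the derivation of $s_\eta$ from $\sigma(g)=Jg^{-T}J$ and supply a direct $F$/$F^{-1}$ computation as a check, but the underlying argument is identical.
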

\begin{proof}
Apply Proposition~\ref{prop:involution_equivariance}, noting that $s_\eta(L) = JL^{-1}J$ for $L \in \mathcal{L}$.
\end{proof}
Note that the preceding two propositions can be combined to conclude that if $\tilde{L}_i = L_i^{-1}$, $i=1,2,\dots,m$, then
\[
\mathcal{I}\tilde{L}(x) = \left(\mathcal{I}L(x)\right)^{-1}.
\]
To see this, observe that $L_i^{-1} = J (JL_i^{-1}J) J^T$ and $J \in O(p,q)$.

\begin{lemma}
If $\sum_{i=1}^m \phi_i(x) = 1$ for every $x \in \Omega$, then
\[
\det \mathcal{I}L(x) = \prod_{i=1}^m \left( \det L_i \right)^{\phi_i(x)}
\]
for every $x \in \Omega$.
\end{lemma}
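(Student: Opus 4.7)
The plan is to take determinants of the explicit formula
\[
\mathcal{I}L(x) = \exp\!\left( \sum_{i=1}^m \phi_i(x) \log(L_i J) \right) J
\]
from~(\ref{interp1_opq}) and exploit the multiplicativity of $\det$ together with the identities $\det\exp(A) = \exp(\mathrm{tr}(A))$ and $\mathrm{tr}(\log(B)) = \log(\det(B))$. The latter holds for the principal logarithm of any matrix $B$ with no negative real eigenvalues (see, e.g., Higham's functions-of-matrices text), and is precisely applicable here because $L_i \in \mathcal{L}_*$ was defined so that $L_i J$ has no negative real eigenvalues.

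First, I would compute
\[
\det \mathcal{I}L(x) = (\det J)\,\exp\!\left( \sum_{i=1}^m \phi_i(x)\, \mathrm{tr}\!\left(\log(L_i J)\right) \right) = (\det J)\prod_{i=1}^m \bigl(\det(L_i J)\bigr)^{\phi_i(x)},
\]
using linearity of the trace inside the outer exponential and then converting back with $\exp(\phi_i \log \det(L_i J)) = \det(L_i J)^{\phi_i(x)}$. Since $\det J = (-1)^p$ and $\det(L_i J) = (-1)^p \det L_i$, and since each $L_i$ has signature $(q,p)$ so that $\mathrm{sign}(\det L_i) = (-1)^p$, the quantities $\det(L_i J)$ are positive reals, making the real powers $\det(L_i J)^{\phi_i(x)}$ unambiguous.

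Next I would invoke the partition-of-unity hypothesis $\sum_i \phi_i(x) = 1$ to pull out the sign factor:
\[
\prod_{i=1}^m \bigl((-1)^p \det L_i\bigr)^{\phi_i(x)} = (-1)^{p\sum_i \phi_i(x)} \prod_{i=1}^m (\det L_i)^{\phi_i(x)} = (-1)^p \prod_{i=1}^m (\det L_i)^{\phi_i(x)},
\]
where $(\det L_i)^{\phi_i(x)}$ is interpreted via the principal branch (as $\exp(\phi_i(x)\log(\det L_i))$). Combining with the prefactor $\det J = (-1)^p$ yields $(-1)^{2p} = 1$, so
\[
\det \mathcal{I}L(x) = \prod_{i=1}^m (\det L_i)^{\phi_i(x)},
\]
as claimed.

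The only subtle step is the identity $\mathrm{tr}(\log B) = \log\det B$ for the principal logarithm, which requires that $B$ have no negative real eigenvalues; this is exactly what $L_i \in \mathcal{L}_*$ guarantees. The bookkeeping with $(-1)^p$ and the principal branch of the real power is straightforward once the partition-of-unity hypothesis is invoked, and there is no obstacle beyond verifying that the branch conventions line up consistently on both sides.
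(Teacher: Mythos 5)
Your proof is correct and follows essentially the same route as the paper's: take determinants of~(\ref{interp1_opq}), apply $\det\exp(A)=\exp(\mathrm{tr}(A))$ and $\mathrm{tr}(\log B)=\log\det B$, and then use the partition of unity together with $\det J = \pm 1$ to cancel the sign factors. Your extra remark that $\det(L_iJ) > 0$ (so the real powers are unambiguous) is a small clarification the paper leaves implicit, but the argument is the same.
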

\begin{proof}
Using the identities $\det\exp(A) = \exp(\mathrm{tr}(A))$ and $\mathrm{tr}(\log(A)) = \log(\det A)$, we have
\begin{align*}
\det \mathcal{I}L(x) 
&= \det \left( \exp\left( \sum_{i=1}^m \phi_i(x) \log( L_i J ) \right) \right) \det J \\
&= \exp \left( \mathrm{tr} \left( \sum_{i=1}^m \phi_i(x) \log( L_i J ) \right) \right) \det J \\
&= \exp \left( \sum_{i=1}^m \phi_i(x) \mathrm{tr}  \left( \log( L_i J ) \right) \right) \det J \\
&= \exp \left( \sum_{i=1}^m \phi_i(x) \log \left( \det( L_i J ) \right) \right) \det J \\
&= \left( \prod_{i=1}^m \det( L_i J )^{\phi_i(x)} \right) \det J \\
&= \left( \prod_{i=1}^m \det( L_i )^{\phi_i(x)} \det( J )^{\phi_i(x)} \right) \det J \\
%&= \left( \prod_{i=1}^m \det( L_i )^{\phi_i(x)} \right) (\det J)^2 \\
%&= \left( \prod_{i=1}^m \det( L_i )^{\phi_i(x)} \right).
\end{align*}
The conclusion then follows from the facts that $\sum_{i=1}^m \phi_i(x) = 1$ and $\det J = \pm 1$.
\end{proof}

\paragraph{Generalizations.}  

As explained abstractly in Section~\ref{sec:interp_generalizations}, the interpolation formula~(\ref{interp2_opq}) can be generalized by fixing an element $\bar{A} \in GL_n(\mathbb{R})$ and replacing~(\ref{F_opq}) with the map
\[
F_{\bar{A}}(X) = \bar{A} \exp(X) J \left(\bar{A} \exp(X)\right)^T  = \bar{A} F(X) \bar{A}^T.
\]
The inverse of this map reads
\[
F_{\bar{A}}^{-1} (L) = \frac{1}{2} \log( \bar{A}^{-1} L \bar{A}^{-T} J ).
\]
Substituting into~(\ref{generalinterp}), gives, after some simplification, the interpolation formula
\begin{equation} \label{interp_general_opq}
\mathcal{I}_{\bar{A}} L(x) = \bar{L} \exp\left( \sum_{i=1}^m \phi_i(x) \log\left( \bar{L}^{-1} L_i \right) \right),
\end{equation}
where $\bar{L} = \bar{A} J \bar{A}^T$.  

Rather than fixing $\bar{A}$, one may choose to define $\bar{A}$ implicitly via~(\ref{Riemannianmean}); that is,
\[
\bar{A}(x) J \bar{A}(x)^T = \mathcal{I}_{\bar{A}(x)} L(x).
\]
The output of the resulting interpolation scheme is the solution $\bar{L}$ to the equation
\begin{equation} \label{Riemannian_mean_opq}
\sum_{i=1}^m \phi_i(x) \log\left( \bar{L}^{-1} L_i \right)  = 0,
\end{equation}
which can be computed with a fixed-point iteration.

\paragraph{Algorithms.}

In summary, we have derived the following pair of algorithms for interpolating matrices in the space $\mathcal{L}$ of nonsingular symmetric matrices with signature $(q,p)$.  The first of these algorithms implements~(\ref{interp_general_opq}), which reduces to~(\ref{interp2_opq}) when $\bar{L}$ is taken equal to $J$.  The algorithm implicitly requires its inputs to have the property that for each $i=1,2,\dots,m$, the matrix $\bar{L}^{-1} L_i$ has no negative real eigenvalues.

\begin{algorithm}[H] 
\caption{Interpolation of symmetric matrices with fixed signature} \label{alg:opq}
\begin{algorithmic}[1]
\Require Matrices $\{L_i \in \mathcal{L}\}_{i=1}^m$, shape functions $\{\phi_i : \Omega \rightarrow \mathbb{R} \}_{i=1}^m$, point $x \in \Omega$, matrix~$\bar{L} \in \mathcal{L}$ %(optional argument, with default value $\bar{L}=J$)
\State\Return $\bar{L} \exp\left( \sum_{i=1}^m \phi_i(x) \log\left( \bar{L}^{-1} L_i \right) \right)$
\end{algorithmic}
\end{algorithm}

The second algorithm solves~(\ref{Riemannian_mean_opq}), and requires the same constraint on its inputs as Algorithm~\ref{alg:opq}.  Observe that Algorithm~\ref{alg:opq} is equivalent to Algorithm~\ref{alg:opq_Riemannian} if one terminates the fixed-point iteration after the first iteration.

\begin{algorithm}[H] 
\caption{Iterative interpolation of symmetric matrices with fixed signature} \label{alg:opq_Riemannian}
\begin{algorithmic}[1]
\Require Matrices $\{L_i \in \mathcal{L}\}_{i=1}^m$, shape functions $\{\phi_i : \Omega \rightarrow \mathbb{R} \}_{i=1}^m$, point $x \in \Omega$, initial guess~$\bar{L} \in \mathcal{L}$, tolerance $\varepsilon > 0$
\While{$\left\| \sum_{i=1}^m \phi_i(x) \log\left( \bar{L}^{-1} L_i \right) \right\| > \varepsilon$ \vspace{0.2em}}
\State $\bar{L}= \bar{L} \exp\left( \sum_{i=1}^m \phi_i(x) \log\left( \bar{L}^{-1} L_i \right) \right)$
\EndWhile
\State\Return $\bar{L}$
\end{algorithmic}
\end{algorithm}

\subsubsection{Symmetric Positive-Definite Matrices} \label{sec:spd}

When $J=I$, the preceding theory provides structure-preserving interpolation schemes for the space $SPD(n)$ of symmetric positive-definite matrices.  The formula~(\ref{interp2_opq}) is the weighted log-Euclidean mean introduced by~\cite{arsigny2007geometric}, and equation~(\ref{Riemannian_mean_opq}) gives the weighted Riemannian mean (or Karcher mean) of symmetric positive-definite matrices~\cite{moakher2005differential,bhatia2013riemannian,karcher1977riemannian}.  The latter observation can be viewed as a consequence of Lemma~\ref{lemma:Riemannian_mean}, which applies in this setting for the following reason.  If the general linear group is equipped with the canonical left-invariant Riemannian metric, then the Riemmannian exponential map $\mathrm{Exp}_I : T_I GL_n(\mathbb{R}) \rightarrow GL_n(\mathbb{R})$ at the identity reads~\cite{andruchow2014left}
\[
\mathrm{Exp}_I(A) = \exp(A^T) \exp(A-A^T).
\]
This formula reduces to $\mathrm{Exp}_I(A) = \exp(A)$ when $A$ is a normal matrix (i.e., $A^T A = A A^T$).  In particular, $\mathrm{Exp}_I(A) = \exp(A)$ when $A$ is symmetric, which is precisely the condition~(\ref{Expexp}).

We remark that the interpolation formula~(\ref{interp2_opq}) on $SPD(n)$ was devised in~\cite{arsigny2007geometric} by endowing $SPD(n)$ with what the authors term a ``novel vector space structure.''  This vector space structure is nothing more than that obtained by identifying $SPD(n)$ with the Lie triple system $\mathfrak{sym}_I(n)$ via the map~(\ref{Finv_opq}), as we have done here.

\subsubsection{Lorentzian Metrics} \label{sec:lorentz}

When $n=4$ and $J = \mathrm{diag}(-1,1,1,1)$, the preceding theory provides structure-preserving interpolation schemes for the space of Lorentzian metrics -- the space of symmetric, nonsingular matrices having signature $(3,1)$.  Lemma~\ref{lemma:equivariance_opq} states that the interpolation operator~(\ref{interp2_opq}) in this setting commutes with Lorentz transformations.  By choosing, for instance, $\Omega$ equal to a four-dimensional simplex (or a four-dimensional hypercube) and $\{\phi_i\}_i$ equal to scalar-valued Lagrange polynomials (or tensor products of Lagrange polynomials) on $\Omega$, one obtains a family of Lorentzian metric-valued finite elements. 

In view of their potential application to numerical relativity, we have numerically computed the interpolation error committed by such elements when approximating the Schwarzschild metric, which is an explicit solution to Einstein's equations outside of a spherical mass~\cite{carroll2004spacetime}.  In Cartesian coordinates, this metric reads
\begin{equation} \label{schwarz}
L(t,x,y,z) = 
\begin{pmatrix}
-\left(1 - \frac{R}{r}\right) & 0 & 0 & 0 \\
0 & 1+\left(\frac{R}{r-R}\right) \frac{x^2}{r^2} & \left(\frac{R}{r-R}\right) \frac{xy}{r^2} & \left(\frac{R}{r-R}\right) \frac{xz}{r^2} \\
0 & \left(\frac{R}{r-R}\right) \frac{xy}{r^2} & 1+\left(\frac{R}{r-R}\right) \frac{y^2}{r^2} & \left(\frac{R}{r-R}\right) \frac{yz}{r^2} \\
0 & \left(\frac{R}{r-R}\right) \frac{xz}{r^2} & \left(\frac{R}{r-R}\right) \frac{yz}{r^2} & 1+\left(\frac{R}{r-R}\right) \frac{z^2}{r^2}
\end{pmatrix},
\end{equation}
where $R$ (the Schwarzschild radius) is a positive constant (which we take equal to 1 in what follows) and $r=\sqrt{x^2+y^2+z^2} > R$.  We interpolated this metric over the region $U = \{0\} \times [2,3] \times [2,3] \times [2,3]$ on a uniform $N \times N \times N$ grid of cubes using the formula~(\ref{interp2_opq}) elementwise, with shape functions $\{\phi_i\}_i$ given by tensor products of Lagrange polynomials of degree $k$.  The results in Table~\ref{tab:schwarz} indicate that the $L^2$-error
\begin{equation} \label{L2err}
\|\mathcal{I}L - L\|_{L^2(U)} = \left( \int_U \left\| \mathcal{I}L(t,x,y,z) - L(t,x,y,z) \right\|_F^2 \, dx \, dy \, dz \right)^{1/2}
\end{equation}
(which we approximated with numerical quadrature) converges to zero with order 2 and 3, respectively, when using polynomials of degree $k=1$ and $k=2$.  Here, $\|\cdot\|_F$ denotes the Frobenius norm.  In addition, Table~\ref{tab:schwarz} indicates that the $H^1$-error 
\begin{equation} \label{H1err}
|\mathcal{I}L - L|_{H^1(U)} = \left( \int_U \sum_{j=1}^4 \left\| \frac{\partial \mathcal{I}L}{\partial \xi_j}(t,x,y,z) - \frac{\partial L}{\partial \xi_j}(t,x,y,z) \right\|_F^2 \, dx \, dy \, dz \right)^{1/2}
\end{equation}
converges to zero with order 1 and 2, respectively, when using polynomials of degree $k=1$ and $k=2$.  Here, we have denoted $\xi=(t,x,y,z)$.

For the sake of comparison, Table~\ref{tab:schwarzcomponentwise} shows the interpolation errors committed when applying componentwise polynomial interpolation to the same problem.  Within each element, the value of this interpolant at a point $\xi=(t,x,y,z)$ lying in the element is given by
\begin{equation} \label{componentwiseinterp}
\mathcal{I}L(\xi) = \sum_{i=1}^m \phi_i(\xi) L_i,
\end{equation}
where $\{\phi_i\}_i$ are tensor products of Lagrange polynomials of degree $k$ and $\{L_i\}_i$ are the values of $L$ at the corresponding degrees of freedom.
The errors committed by this interpolation scheme are very close to those observed in Table~\ref{tab:schwarz} for the structure-preserving scheme~(\ref{interp2_opq}).

For this particular numerical example, the componentwise polynomial interpolant~(\ref{componentwiseinterp}) has correct signature $(3,1)$ for every $(t,x,y,z) \in U$.  This need not hold in general.  For example, consider the metric tensor
\[
L(t,x,y,z) = 
\begin{pmatrix} 
-6\sin^2(2\pi x)+3\sin^2(\pi x) & 3\cos(2\pi x) & 0 & 0 \\
3\cos(2\pi x) & 2\sin^2(2\pi x)+2\sin^2(\pi x) & 0 & 0 \\
0 & 0 & 1 & 0 \\
0 & 0 & 0 & 1
\end{pmatrix}.
\]
Though not a solution to Einstein's equations, this metric tensor nonetheless has signature $(3,1)$ everywhere. Indeed, a numerical calculation verifies that at all points $(t,x,y,z)$, the matrix $L(t,x,y,z)$ has eigenvalues $\lambda_-,1,1,\lambda_+$ satisfying $\lambda_- \le \alpha$ and $\lambda_+ \ge \beta$ with $\alpha \approx -0.54138$ and $\beta \approx 2.23064$.  Interpolating this metric componentwise with linear polynomials (over the region same region $U$ as above) produces a metric with signature $(4,0)$ at 32 quadrature points (out of 64 total) on the coarsest grid ($N=2$).  The essence of the problem is that for any integer $k$, any $t$, any $y$, and any $z$, the average of $L(t,k/2,y,z)$ and $L(t,(k+1)/2,y,z)$ is
\[
\frac{1}{2} \left( L(t,k/2,y,z) + L(t,(k+1)/2,y,z) \right) = 
\begin{pmatrix}
\frac{3}{2} & 0 & 0 & 0 \\
0 & 1 & 0 & 0 \\
0 & 0 & 1 & 0 \\
0 & 0 & 0 & 1
\end{pmatrix},
\]
which shows (by continuity of the interpolant) that the componentwise linear interpolant~(\ref{componentwiseinterp}) on the coarsest grid ($N=2$) is positive definite on an open subset of $U$.  In contrast, the structure-preserving scheme~(\ref{interp2_opq}) automatically generates an interpolant with correct signature $(3,1)$ at all points $(t,x,y,z)$.

\begin{table}[t]
\centering
\pgfplotstabletypeset[
every head row/.style={after row=\midrule,before row={\midrule & \multicolumn{4}{c|}{$k=1$} & \multicolumn{4}{c}{$k=2$} \\ \midrule}},
create on use/rate1/.style={create col/dyadic refinement rate={1}},
create on use/rate2/.style={create col/dyadic refinement rate={2}},
create on use/rate3/.style={create col/dyadic refinement rate={3}},
create on use/rate4/.style={create col/dyadic refinement rate={4}},
columns={0,1,rate1,2,rate2,3,rate3,4,rate4},
columns/0/.style={sci zerofill,column type/.add={}{|},column name={$N$}},
columns/1/.style={sci zerofill,precision=3,column type/.add={}{|},column name={$L^2$-error}},
columns/2/.style={sci zerofill,precision=3,column type/.add={}{|},column name={$H^1$-error}}, 
columns/3/.style={sci zerofill,precision=3,column type/.add={}{|},column name={$L^2$-error}},
columns/4/.style={sci zerofill,precision=3,column type/.add={}{|},column name={$H^1$-error}}, 
columns/rate1/.style={fixed zerofill,precision=3,column type/.add={}{|},column name={Order}},
columns/rate2/.style={fixed zerofill,precision=3,column type/.add={}{|},column name={Order}},
columns/rate3/.style={fixed zerofill,precision=3,column type/.add={}{|},column name={Order}},
columns/rate4/.style={fixed zerofill,precision=3,column name={Order}}
]
{Data/schwarz.dat}
\caption{Error incurred when interpolating the Schwarzschild metric~(\ref{schwarz}) over the region $U = \{0\} \times [2,3] \times [2,3] \times [2,3]$ using the formula~(\ref{interp2_opq}).  The interpolant was computed elementwise on a uniform $N \times N \times N$ grid of cubes, with shape functions $\{\phi_i\}_i$ on each cube given by tensor products of Lagrange polynomials of degree $k$.}
\label{tab:schwarz}
\end{table}

\begin{table}[t]
\centering
\pgfplotstabletypeset[
every head row/.style={after row=\midrule,before row={\midrule & \multicolumn{4}{c|}{$k=1$} & \multicolumn{4}{c}{$k=2$} \\ \midrule}},
create on use/rate1/.style={create col/dyadic refinement rate={1}},
create on use/rate2/.style={create col/dyadic refinement rate={2}},
create on use/rate3/.style={create col/dyadic refinement rate={3}},
create on use/rate4/.style={create col/dyadic refinement rate={4}},
columns={0,1,rate1,2,rate2,3,rate3,4,rate4},
columns/0/.style={sci zerofill,column type/.add={}{|},column name={$N$}},
columns/1/.style={sci zerofill,precision=3,column type/.add={}{|},column name={$L^2$-error}},
columns/2/.style={sci zerofill,precision=3,column type/.add={}{|},column name={$H^1$-error}}, 
columns/3/.style={sci zerofill,precision=3,column type/.add={}{|},column name={$L^2$-error}},
columns/4/.style={sci zerofill,precision=3,column type/.add={}{|},column name={$H^1$-error}}, 
columns/rate1/.style={fixed zerofill,precision=3,column type/.add={}{|},column name={Order}},
columns/rate2/.style={fixed zerofill,precision=3,column type/.add={}{|},column name={Order}},
columns/rate3/.style={fixed zerofill,precision=3,column type/.add={}{|},column name={Order}},
columns/rate4/.style={fixed zerofill,precision=3,column name={Order}}
]
{Data/schwarzcomponentwise.dat}
\caption{Error incurred when interpolating the Schwarzschild metric~(\ref{schwarz}) over the region $U = \{0\} \times [2,3] \times [2,3] \times [2,3]$ using the componentwise interpolation formula~(\ref{componentwiseinterp}).  The interpolant was computed elementwise on a uniform $N \times N \times N$ grid of cubes, with shape functions $\{\phi_i\}_i$ on each cube given by tensor products of Lagrange polynomials of degree $k$.}
\label{tab:schwarzcomponentwise}
\end{table}

\subsection{The Grassmannian} \label{sec:grass}

Let $p$ and $n$ be positive integers satisfying $p < n$.  Consider the Grassmannian $Gr(p,n)$, which consists of all $p$-dimensional linear subspaces of $\mathbb{R}^n$.  Any element $\mathcal{V} \in Gr(p,n)$ can be written as the span of $p$ vectors $v_1,v_2,\dots,v_p \in \mathbb{R}^n$.  The orthogonal group $O(n)$ acts transitively on $Gr(p,n)$ via the action
\[
A \cdot \mathrm{span}(v_1,v_2,\dots,v_p) \mapsto \mathrm{span}(Av_1,Av_2,\dots,Av_p),
\]
where $A \in O(n)$.  For convenience, we will sometimes write $A\mathcal{V}$ as shorthand for \break$\mathrm{span}(Av_1,Av_2,\dots,Av_p)$.   Let $e_1,e_2,\dots,e_n$ be the canonical basis for $\mathbb{R}^n$.  The stabilizer of $\mathrm{span}(e_1,e_2,\dots,e_p)$ in $O(n)$ is the subgroup
\[
O(p) \times O(n-p) = \left\{ \begin{pmatrix} A_1 & 0 \\ 0 & A_2 \end{pmatrix} \mid A_1 \in O(p), \, A_2 \in O(n-p) \right\}.
\]
The elements of $O(p) \times O(n-p)$ are precisely those matrices in $O(n)$ that are fixed points of the involutive automorphism
\begin{align*}
\sigma : O(n) &\rightarrow O(n) \\
A &\mapsto J A J,
\end{align*}
where 
\[
J = \begin{pmatrix} -I_p & 0 \\ 0 & I_{n-p} \end{pmatrix},
\]
and $I_p$ and $I_{n-p}$ denote the $p \times p$ and $(n-p) \times (n-p)$ identity matrices, respectively.  The matrices in $O(n)$ that are mapped to their inverses by $\sigma$ constitute the space
\[
Sym_J(n) \cap O(n) = \{ P \in O(n) \mid PJ = JP^T \}.
\]
The generalized polar decomposition of a matrix $A \in O(n)$ in this setting thus reads
\begin{equation} \label{gpd_grass}
A = PQ, \quad P \in Sym_J(n) \cap O(n), \, Q \in O(p) \times O(n-p).
\end{equation}
The corresponding Cartan decomposition reads
\[
Z = X + Y, \quad X \in \mathfrak{sym}_J(n) \cap \mathfrak{o}(n), \, Y \in \mathfrak{o}(p) \times \mathfrak{o}(n-p),
\]
where, for each $m$, $\mathfrak{o}(m)$ denotes the space of antisymmetric $m \times m$ matrices,
\[
\mathfrak{o}(p) \times \mathfrak{o}(n-p) = \left\{ \begin{pmatrix} Y_1 & 0 \\ 0 & Y_2 \end{pmatrix} \mid Y_1 \in \mathfrak{o}(p), \, Y_2 \in \mathfrak{o}(n-p) \right\}
\]
and
\begin{align*}
\mathfrak{sym}_J(n) \cap \mathfrak{o}(n)
&= \{ X \in \mathfrak{o}(n) \mid XJ = JX^T \} \\
&= \left\{ \begin{pmatrix} 0 & -B^T \\ B & 0 \end{pmatrix} \mid B \in \mathbb{R}^{(n-p) \times p} \right\}.
\end{align*}

The map $F : \mathfrak{sym}_J(n) \cap \mathfrak{o}(n) \rightarrow Gr(p,n)$ is given by
\begin{equation*}
F(X) = \mathrm{span}(\exp(X)e_1, \exp(X)e_2, \dots, \exp(X)e_p).
\end{equation*}
The inverse of $F$ can be computed (naively) as follows. Given an element $\mathcal{V} \in Gr(p,n)$, let $a_1,a_2,\dots,a_p$ be an orthonormal basis for $\mathcal{V}$.  Extend this basis to an orthonormal basis $a_1,a_2,\dots,a_n$ of $\mathbb{R}^n$.  Then
\begin{equation*}
F^{-1}(\mathcal{V}) = \log(P),
\end{equation*}
where $P \in Sym_J(n) \cap O(n)$ is the first factor in the generalized polar decomposition~(\ref{gpd_grass}) of $A = \left( a_1 \, a_2 \, \cdots \, a_n\right)$.  Note that this map is independent of the chosen bases for $\mathcal{V}$ and its orthogonal complement in $\mathbb{R}^n$.  Indeed, if $\tilde{a}_1,\tilde{a}_2,\dots,\tilde{a}_p$ is any other orthonormal basis for $\mathcal{V}$ and $\tilde{a}_{p+1},\tilde{a}_{p+2},\dots,\tilde{a}_n$ is any other basis for the orthogonal complement of $\mathcal{V}$, then there is a matrix $R \in O(p) \times O(n-p)$ such that $\tilde{A} = AR$, where $\tilde{A} = \left( \tilde{a}_1 \, \tilde{a}_2 \, \cdots \, \tilde{a}_n\right)$.  The generalized polar decomposition of $\tilde{A}$ is thus $\tilde{A} = P\tilde{Q}$, where $\tilde{Q} = QR$.

More generally, we may opt to fix an element $\bar{A} \in O(n)$ and consider interpolants of the form~(\ref{generalinterp}) using the map
\begin{equation} \label{F_grass}
F_{\bar{A}} (X) = \mathrm{span}(\bar{A}\exp(X)e_1, \bar{A}\exp(X)e_2, \dots, \bar{A}\exp(X)e_p),
\end{equation}
The inverse of this map, in analogy with the preceding paragraph, is
\begin{equation} \label{Finv_grass}
F^{-1}_{\bar{A}}(\mathcal{V}) = \log(P),
\end{equation}
where now $P \in Sym_J(n) \cap O(n)$ is the first factor in the generalized polar decomposition~(\ref{gpd_grass}) of $\bar{A}^T A$, where  $A \in O(n)$ is a matrix whose first $p$ and last $n-p$ columns, respectively, form orthonormal bases for $\mathcal{V}$ and its orthogonal complement.

\paragraph{Algorithms.}

We now turn our attention to the computation of the interpolant~(\ref{generalinterp}) in this setting. A naive implementation using the steps detailed above for computing $F_{\bar{A}}$ and its inverse would lead to an algorithm for computing the interpolant having complexity $O(n^3)$.  Remarkably, the computation of~(\ref{generalinterp}) can be performed in $O(np^2)$ operations, as we now show.  The resulting algorithm turns out to be identical to that proposed in~\cite{amsallem2008interpolation}.  The fact that this algorithm scales linearly with $n$ is noteworthy, as it renders this interpolation scheme practical for applications in which $n \gg p$.

The derivation of the algorithm hinges upon the following two lemmas, which, when combined, allow for a computation of the interpolant while operating solely on matrices of size $n \times p$ or smaller.  The first lemma gives a useful formula for $F_{\bar{A}} (X)$.

\begin{lemma} \label{lemma:exp_grass}
Let 
\[
\bar{A} = \begin{pmatrix} \bar{A}_1 & \bar{A}_2 \end{pmatrix} \in O(n)
\]
with $\bar{A}_1 \in \mathbb{R}^{n \times p}$ and $\bar{A}_2 \in \mathbb{R}^{n \times (n-p)}$, and let $X =  \begin{pmatrix} 0 & -B^T \\ B & 0 \end{pmatrix} \in \mathfrak{sym}_J(n) \cap \mathfrak{o}(n)$ with $B \in \mathbb{R}^{(n-p) \times p}$.  Then
\[
\bar{A} \exp(X) \begin{pmatrix} I_p \\ 0 \end{pmatrix} = \bar{A}_1 V \cos(\Theta)V^T  + U \sin(\Theta) V^T,
\]
where $U \in \mathbb{R}^{n \times p}$, $\Theta \in \mathbb{R}^{p \times p}$, and $V \in \mathbb{R}^{p \times p}$ denote the factors in the thin singular value decomposition 
\begin{equation} \label{A2Bsvd}
\bar{A}_2 B = U \Theta V^T.
\end{equation}  
In particular, $F_{\bar{A}}(X)$ is the space spanned by the columns of $\bar{A}_1 V \cos(\Theta)V^T  + U \sin(\Theta) V^T$.  Equivalently, since $V$ is orthogonal, $F_{\bar{A}}(X)$ is the space spanned by the columns of $\bar{A}_1 V \cos(\Theta)  + U \sin(\Theta)$.
\end{lemma}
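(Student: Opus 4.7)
The plan is to evaluate $\bar{A}\exp(X)\begin{pmatrix} I_p \\ 0 \end{pmatrix}$ by direct power series manipulation, exploiting the block antidiagonal structure of $X$ and then converting the result into the language of the SVD of $\bar{A}_2 B$. First I would compute
\[
X^2 = \begin{pmatrix} -B^T B & 0 \\ 0 & -BB^T \end{pmatrix},
\]
which is block diagonal. An easy induction then shows that, applied to $e_\star := \begin{pmatrix} I_p \\ 0 \end{pmatrix}$,
\[
X^{2k} e_\star = \begin{pmatrix} (-B^T B)^k \\ 0 \end{pmatrix}, \qquad X^{2k+1} e_\star = \begin{pmatrix} 0 \\ B\,(-B^T B)^k \end{pmatrix}.
\]
Splitting the series for $\exp(X)e_\star$ into even and odd parts therefore yields a top block equal to $\cos\bigl(\sqrt{B^T B}\bigr)$ and a bottom block equal to $B\cdot\operatorname{sinc}\bigl(\sqrt{B^T B}\bigr)$, where $\sqrt{\cdot}$ is the principal matrix square root and $\operatorname{sinc}$ is defined by the series $\sum_{k\ge 0}(-\cdot)^k/(2k+1)!$ so as to avoid dividing by zero singular values.

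Next I would convert these matrix functions of $\sqrt{B^T B}$ into functions of $\Theta$ via the hypothesized SVD. Since $\bar{A}_2$ has orthonormal columns, $(\bar{A}_2 B)^T(\bar{A}_2 B) = B^T B$, so the identity $\bar{A}_2 B = U\Theta V^T$ gives $B^T B = V\Theta^2 V^T$ and hence $\sqrt{B^T B} = V\Theta V^T$ (where we assume, as usual, that $\Theta$ has nonnegative entries). Consequently $\cos(\sqrt{B^T B}) = V\cos(\Theta)V^T$ and the bottom block equals $B V\operatorname{sinc}(\Theta)V^T$.

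Finally, I would multiply by $\bar{A} = (\bar{A}_1 \ \bar{A}_2)$ and simplify the bottom contribution. The top block contributes $\bar{A}_1 V\cos(\Theta)V^T$ directly. For the bottom block, I use $\bar{A}_2 B \cdot V = U\Theta V^T V = U\Theta$ and the identity $\Theta\operatorname{sinc}(\Theta) = \sin(\Theta)$ to obtain $\bar{A}_2 B V\operatorname{sinc}(\Theta)V^T = U\sin(\Theta)V^T$. Adding the two contributions gives the claimed formula. The final assertion about $F_{\bar{A}}(X)$ being the column span of $\bar{A}_1 V\cos(\Theta) + U\sin(\Theta)$ then follows because the post-factor $V^T$ is orthogonal (hence invertible), so stripping it off does not alter the span.

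The only delicate point, which is not really an obstacle but worth flagging, is the treatment of zero singular values of $\Theta$: the expression $\operatorname{sinc}(\Theta)$ must be interpreted as the power series $\sum_{k\ge 0}(-\Theta^2)^k/(2k+1)!$ rather than as $\sin(\Theta)\Theta^{-1}$, and the identity $\Theta\operatorname{sinc}(\Theta)=\sin(\Theta)$ must be read at the level of power series so that it holds regardless of invertibility of $\Theta$. Everything else is routine algebra inside convergent matrix series, which is justified because the series for $\exp$, $\cos$, $\sin$, and $\operatorname{sinc}$ all converge on the whole space of matrices.
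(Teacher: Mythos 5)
Your proof is correct and complete. The paper itself does not prove this lemma; it merely cites~\cite[Theorem 2.3]{edelman1998geometry}, so you have supplied an argument the authors chose to omit. Your route --- block-antidiagonal power series for $\exp(X)$ applied to $\begin{pmatrix} I_p \\ 0 \end{pmatrix}$, converting the resulting functions of $\sqrt{B^T B}$ into functions of $\Theta$ via $B^T B = (\bar A_2 B)^T(\bar A_2 B) = V\Theta^2 V^T$, and then absorbing the $\Theta\,\mathrm{sinc}(\Theta)=\sin(\Theta)$ identity --- is essentially the same computation that underlies the cited Edelman--Arias--Smith result, so it is not a genuinely different method so much as a self-contained reconstruction of the reference. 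Your handling of zero singular values via the power-series definition of $\mathrm{sinc}$ is the right way to make the argument clean; one could add the minor observation that the thin SVD's $U$-factor is not unique when $\Theta$ is singular, but the resulting product $U\sin(\Theta)V^T$ is nonetheless well-defined because the ambiguous columns of $U$ are annihilated by $\sin(\Theta)$. The final passage from $\bar A_1 V\cos(\Theta)V^T + U\sin(\Theta)V^T$ to the span statement by stripping the invertible right factor $V^T$ is also exactly what the lemma asserts.
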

\begin{proof}
The formula is proved in~\cite[Theorem 2.3]{edelman1998geometry}.
\end{proof}

The next lemma gives a useful formula for $F_{\bar{A}}^{-1} (\mathcal{V})$.  Closely related formulas appear without proof in~\cite{begelfor2006affine,amsallem2008interpolation,chang2012feature} and elsewhere, so we give a proof here for completeness.

\begin{lemma} \label{lemma:log_grass}
%Let 
%\[
%\bar{A} = \begin{pmatrix} \bar{A}_1 & \bar{A}_2 \end{pmatrix} \in O(n)
%\]
%with $\bar{A}_1 \in \mathbb{R}^{n \times p}$ and $\bar{A}_2 \in \mathbb{R}^{n \times (n-p)}$, 
Let $\bar{A} = \begin{pmatrix} \bar{A}_1 & \bar{A}_2 \end{pmatrix} \in O(n)$ be as in Lemma~(\ref{lemma:exp_grass}), 
and let $\mathcal{V} \in Gr(p,n)$. Let
\[
A = \begin{pmatrix} A_1 & A_2 \end{pmatrix} \in O(n)
\]
be such that the columns of $A_1$ and $A_2$, respectively, form orthonormal bases for $\mathcal{V}$ and its orthogonal complement.  Assume that $\bar{A}_1^T A_1$ is invertible.
Then
\[
F^{-1}_{\bar{A}}(\mathcal{V}) = \begin{pmatrix} 0 & -B^T \\ B & 0 \end{pmatrix},
\]
where
\begin{equation} \label{Bsoln}
B = \bar{A}_2^T U \arctan(\Sigma) V^T,
\end{equation}
and $U \in \mathbb{R}^{n \times p}$, $\Sigma \in \mathbb{R}^{p \times p}$, and $V \in \mathbb{R}^{p \times p}$ denote the factors in the thin singular value decomposition
\begin{equation} \label{svdA1A2}
(I-\bar{A}_1 \bar{A}_1^T) A_1 (\bar{A}_1^T A_1)^{-1} = U \Sigma V^T.
\end{equation}
\end{lemma}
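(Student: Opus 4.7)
The plan is to start from the definition of $F_{\bar{A}}^{-1}(\mathcal{V})$, write $X = \begin{pmatrix} 0 & -B^T \\ B & 0 \end{pmatrix}$ for the unknown block $B$, and let $\bar{A}_2 B = U' \Theta' V'^T$ be its thin SVD. By Lemma~\ref{lemma:exp_grass}, $\mathcal{V}$ is the span of the columns of
\[
Y = \bar{A}_1 V' \cos(\Theta') V'^T + U' \sin(\Theta') V'^T.
\]
The first task is to check that $Y$ itself has orthonormal columns: expanding $Y^T Y$ and using that $\bar{A}_1^T \bar{A}_1 = I_p$, $U'^T U' = I_p$, $V'^T V' = I_p$, and crucially $\bar{A}_1^T U' = 0$ (since the columns of $U'$ lie in the range of $\bar{A}_2$, which is orthogonal to the range of $\bar{A}_1$), the cross terms vanish and one obtains $Y^T Y = V'(\cos^2\Theta' + \sin^2\Theta')V'^T = I_p$.

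Since both $Y$ and $A_1$ furnish orthonormal bases of $\mathcal{V}$, there exists $R \in O(p)$ with $A_1 = Y R$. Applying the orthogonal projectors $\bar{A}_1 \bar{A}_1^T$ and $I - \bar{A}_1 \bar{A}_1^T$ separates $Y$ into its $\bar{A}_1$- and $\bar{A}_2$-components, again using $\bar{A}_1^T U' = 0$, yielding
\[
\bar{A}_1^T A_1 = V' \cos(\Theta') V'^T R, \qquad (I - \bar{A}_1 \bar{A}_1^T) A_1 = U' \sin(\Theta') V'^T R.
\]
Under the hypothesis that $\bar{A}_1^T A_1$ is invertible, $\cos(\Theta')$ must be invertible as well, so multiplying the second equation by the inverse of the first gives
\[
(I - \bar{A}_1 \bar{A}_1^T) A_1 (\bar{A}_1^T A_1)^{-1} = U' \tan(\Theta') V'^T.
\]
Here the columns of $U'$ are orthonormal (from $\bar{A}_2 B = U' \Theta' V'^T$ being a thin SVD), $V'$ is orthogonal, and $\tan(\Theta')$ is nonnegative diagonal, so the right-hand side is a valid thin SVD. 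Comparing with the given SVD $(I - \bar{A}_1 \bar{A}_1^T) A_1 (\bar{A}_1^T A_1)^{-1} = U \Sigma V^T$ identifies $U' = U$, $V' = V$, and $\Theta' = \arctan(\Sigma)$ (the latter modulo the usual sign/ordering indeterminacy of SVDs, which can be absorbed into $U'$ and $V'$). Finally, multiplying the equation $\bar{A}_2 B = U' \Theta' V'^T$ on the left by $\bar{A}_2^T$ and using $\bar{A}_2^T \bar{A}_2 = I_{n-p}$ yields $B = \bar{A}_2^T U \arctan(\Sigma) V^T$, which is the claimed formula.

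The routine part is the matrix algebra showing $Y^T Y = I_p$ and projecting onto the $\bar{A}_1$- and $\bar{A}_2$-subspaces. The main obstacle to state cleanly is the matching of the two SVDs: one has to argue that the factors $(U', \Theta', V')$ produced by the computation may be taken as the factors of the SVD in the hypothesis, which is where the ``sufficiently close'' local nature of $F_{\bar{A}}^{-1}$ enters (ensuring the singular values of $\Theta'$ lie in $[0, \pi/2)$ so that $\arctan$ is a genuine inverse of $\tan$ on the relevant range, and ensuring $\cos(\Theta')$ is invertible).
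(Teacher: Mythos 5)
Your proof is correct, but it runs in the opposite direction from the paper's. The paper takes the candidate formula for $B$ as given, computes $\bar{A}_2 B$, observes (using $\bar{A}_2\bar{A}_2^T U = U$, a consequence of the projection identity applied to the given SVD) that $\bar{A}_2 B = U\arctan(\Sigma)V^T$ is already a thin SVD, feeds this into Lemma~\ref{lemma:exp_grass}, and simplifies the result to $A_1(\bar{A}_1^T A_1)^{-1}V\cos(\Theta)V^T$, whose column span is manifestly $\mathcal{V}$. That is a pure verification: one checks that the claimed $B$ hits the target, and local invertibility of $F_{\bar A}$ (which the statement presupposes) supplies uniqueness. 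You instead start from the unknown $B$, apply Lemma~\ref{lemma:exp_grass} to produce the candidate orthonormal basis $Y$, observe $Y^TY=I_p$, write $A_1 = YR$ for $R\in O(p)$, project onto the $\bar A_1$- and $\bar A_2$-components to extract $\bar A_1^T A_1 = V'\cos(\Theta')V'^T R$ and $(I-\bar A_1\bar A_1^T)A_1 = U'\sin(\Theta')V'^T R$, eliminate $R$, match SVDs, and finally invert $\bar A_2 B = U'\Theta'V'^T$ using $\bar A_2^T\bar A_2 = I_{n-p}$. This is a genuine derivation and it explains where the formula comes from, but it carries extra bookkeeping that the paper sidesteps: you must justify $\bar A_1^T U' = 0$ (correct, since columns of $U'$ with nonzero singular values lie in $\mathrm{range}(\bar A_2)$, and the others are killed by $\sin(\Theta')=0$), and you must argue carefully that $U'\tan(\Theta')V'^T$ really can be matched factor-by-factor against the hypothesized SVD $U\Sigma V^T$ despite SVD non-uniqueness — you flag this honestly, and it works because the final expression $\bar A_2^T U\arctan(\Sigma)V^T$ is invariant under the usual orthogonal re-gauging of repeated singular values. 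Both proofs implicitly rely on the principal branch, i.e.\ that the angles $\Theta'$ lie in $[0,\pi/2)$ so $\cos(\Theta')$ is invertible and $\arctan$ is a genuine inverse of $\tan$; the paper hides this in the remark ``$\Sigma=\tan(\Theta)$ is finite,'' while you surface it explicitly. Net: the paper's verification is shorter and avoids the SVD-matching subtlety; your derivation is longer but explains the genesis of~(\ref{Bsoln}).
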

\begin{proof}
It is enough to check  that if $B$ is given by~(\ref{Bsoln}), then the image of $\begin{pmatrix} 0 & -B^T \\ B & 0 \end{pmatrix}$ under $F_{\bar{A}}$ is $\mathcal{V}$.  In other words, we must check that the columns of 
\begin{equation} \label{AbarexpB}
%F_{\bar{A}}\begin{pmatrix} 0 & -B^T \\ B & 0 \end{pmatrix} =  
\bar{A} \exp \begin{pmatrix} 0 & -B^T \\ B & 0 \end{pmatrix} \begin{pmatrix} I_p \\ 0 \end{pmatrix}
\end{equation}
span $\mathcal{V}$.  To this end, observe that by the orthogonality of $\bar{A}$,
\begin{equation} \label{A2A2U}
\bar{A}_2 \bar{A}_2^T U = (I - \bar{A}_1 \bar{A}_1^T) U = U,
\end{equation}
where the last equality follows from~(\ref{svdA1A2}) upon noting that $(I - \bar{A}_1 \bar{A}_1^T)$ is a projection.  Thus, by inspection of~(\ref{Bsoln}), the thin singular value decomposition of $\bar{A}_2 B$ is
\[
\bar{A}_2 B = U \Theta V^T,
\]
where $\Theta = \arctan{\Sigma}$.  Now by Lemma~\ref{lemma:exp_grass},
\begin{equation} \label{expgrass2} 
\bar{A} \exp \begin{pmatrix} 0 & -B^T \\ B & 0 \end{pmatrix} \begin{pmatrix} I_p \\ 0 \end{pmatrix}
= \bar{A}_1 V \cos(\Theta)V^T  + U \sin(\Theta) V^T.
\end{equation}
Using~(\ref{svdA1A2}), this simplifies to
\begin{align}
\bar{A} \exp \begin{pmatrix} 0 & -B^T \\ B & 0 \end{pmatrix} \begin{pmatrix} I_p \\ 0 \end{pmatrix}
&= \bar{A}_1 V \cos(\Theta)V^T + (I-\bar{A}_1 \bar{A}_1^T) A_1 (\bar{A}_1^T A_1)^{-1} V \Sigma^{-1} \sin(\Theta) V^T \nonumber \\
&= \bar{A}_1 V \cos(\Theta)V^T + (I-\bar{A}_1 \bar{A}_1^T) A_1 (\bar{A}_1^T A_1)^{-1} V \cos(\Theta) V^T \nonumber \\
&= A_1  (\bar{A}_1^T A_1)^{-1} V \cos(\Theta) V^T. \nonumber
\end{align}
Observe that since $\Sigma=\tan(\Theta)$ is finite, the diagonal entries of $\cos(\Theta)$ are nonzero. Thus, $(\bar{A}_1^T A_1)^{-1} V \cos(\Theta) V^T$ is invertible, so we conclude that the columns of~(\ref{AbarexpB}) span the same space that is spanned by the columns of $A_1$, namely $\mathcal{V}$.
\end{proof}

The preceding two lemmas lead to the following algorithm, which coincides with that introduced in~\cite{amsallem2008interpolation}, for computing the interpolant
\begin{equation} \label{interp_grass}
\mathcal{I}_{\bar{A}}\mathcal{V}(x) = F_{\bar{A}}\left( \sum_{i=1}^m \phi_i(x) F_{\bar{A}}^{-1}(\mathcal{V}^{(i)}) \right)
\end{equation}
of elements $\mathcal{V}^{(1)}, \mathcal{V}^{(2)}, \dots, \mathcal{V}^{(m)}$ of $Gr(p,n)$.  Note that the computational complexity of this algorithm is $O(np^2)$.  In particular, owing to the identity~(\ref{A2A2U}), the $(n-p) \times n$ matrix $\bar{A}_2$ plays no role in the algorithm, despite its worrisome appearance in~(\ref{A2Bsvd}) and~(\ref{Bsoln}).

\begin{algorithm}[H] 
\caption{Interpolation on the Grassmannian $Gr(p,n)$} \label{alg_grass}
\begin{algorithmic}[1]
\Require Subspaces $\{\mathcal{V}^{(i)} \in Gr(p,n)\}_{i=1}^m$, shape functions $\{\phi_i : \Omega \rightarrow \mathbb{R} \}_{i=1}^m$, point $x \in \Omega$, matrix~$\bar{A}_1 \in \mathbb{R}^{n \times p}$ with orthonormal columns
%\State Let $\bar{A} \in \mathbb{R}^{n \times p}$ be a matrix whose columns form an orthonormal basis for $\bar{\mathcal{V}}$.
\State $Z \gets 0_{n \times p}$
\For{$i=1,2,\dots,m$}
\State \label{alg_grass_basis} Let $A_1 \in \mathbb{R}^{n \times p}$ be a matrix whose columns form an orthonormal basis for $\mathcal{V}^{(i)}$.
\State \begin{varwidth}[t]{\linewidth} Compute the thin singular value decomposition
\[
(I-\bar{A}_1 \bar{A}_1^T) A_1 (\bar{A}_1^T A_1)^{-1} = U \Sigma V^T,
\]
with $U \in \mathbb{R}^{n \times p}$, $\Sigma \in \mathbb{R}^{p \times p}$, and $V \in \mathbb{R}^{p \times p}$.\end{varwidth}
\State $Z \pluseq \phi_i(x) U \mathrm{arctan}(\Sigma) V^T$
\EndFor
\State Compute the thin singular value decomposition
\[
Z = U \Theta V^T,
\]
with $U \in \mathbb{R}^{n \times p}$, $\Sigma \in \mathbb{R}^{p \times p}$, and $V \in \mathbb{R}^{p \times p}$.
\State \label{alg_grass_A} $A \gets \bar{A}_1 V \cos(\Theta) + U \sin(\Theta)$
%\State\Return $\mathcal{I}_{\bar{A}} \mathcal{V}(x)=\mathrm{span}(a_1,a_2,\dots,a_p)$, where $a_j$ denotes the $j^{th}$ column of $A$.
\State\Return $\mathrm{span}(a_1,a_2,\dots,a_p)$, where $a_j$ denotes the $j^{th}$ column of $A$.
\end{algorithmic}
\end{algorithm} 

Note that the output of Algorithm~\ref{alg_grass} is independent of the choice of orthonormal basis made for each $\mathcal{V}^{(i)}$ in Line~\ref{alg_grass_basis} of Algorithm~\ref{alg_grass}.  This can be checked directly by observing that a change of basis corresponds to post-multiplication of $A_1$ by a matrix $R \in O(p)$, leaving $(I-\bar{A}_1 \bar{A}_1^T) A_1 (\bar{A}_1^T A_1)^{-1}$ invariant.  Similarly, the output of the algorithm is invariant under post-multiplication of $\bar{A}_1$ by any matrix $R \in O(p)$, since it can be checked that such a transformation changes the output of Line~\ref{alg_grass_A} from $A$ to $AR$, whose columns span the same space as those of $A$.  This last statement leads to the conclusion that
\begin{equation} \label{interp_grass_basis_change}
\mathcal{I}_{\bar{A}Q} \tilde{\mathcal{V}}(x) = \mathcal{I}_{\bar{A}} \tilde{\mathcal{V}}(x) 
\end{equation}
for any $Q \in O(p) \times O(n-p)$, which reaffirms~(\ref{postmult}).

The interpolant so constructed enjoys the following additional property.

\begin{lemma}
The interpolant~(\ref{interp_grass}) commutes with the action of $O(n)$ on $Gr(p,n)$.  That is, if $Q \in O(n)$ and $\tilde{\mathcal{V}}^{(i)} = Q\mathcal{V}^{(i)}$, $i=1,2,\dots,m$, then
\[
\mathcal{I}_{Q\bar{A}} \tilde{\mathcal{V}}(x) = Q\mathcal{I}_{\bar{A}} \mathcal{V}(x)
\]
for every $x \in \Omega$.
\end{lemma}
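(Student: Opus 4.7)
The plan is to derive the statement as a direct instance of the general equivariance identity~(\ref{general_equivariance}) established for $\mathcal{I}_{\bar{g}}$ in Section~\ref{sec:interp_generalizations}, specialized to $G = O(n)$, $\mathcal{S} = Gr(p,n)$, $\bar{g} = \bar{A}$, and $h = Q$. Before invoking that identity, I would make its content concrete in the present setting by verifying the two ingredients on which it rests.

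First I would observe directly from the definition~(\ref{F_grass}) that for any $Q \in O(n)$,
\[
F_{Q\bar{A}}(X) = \mathrm{span}(Q\bar{A}\exp(X)e_1,\dots,Q\bar{A}\exp(X)e_p) = Q \cdot F_{\bar{A}}(X),
\]
since the action of $Q$ on $Gr(p,n)$ is simply the componentwise action on spanning vectors. This is exactly the identity $F_{h\bar{g}}(P) = h \cdot F_{\bar{g}}(P)$ used in the derivation of~(\ref{general_equivariance}). Taking inverses, we get $F_{Q\bar{A}}^{-1}(Q\mathcal{V}) = F_{\bar{A}}^{-1}(\mathcal{V})$ whenever $\mathcal{V}$ lies in the range of $F_{\bar{A}}$.

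Second, I would plug these identities into the definition~(\ref{interp_grass}) of the interpolant. With $\tilde{\mathcal{V}}^{(i)} = Q\mathcal{V}^{(i)}$, this yields
\[
\mathcal{I}_{Q\bar{A}}\tilde{\mathcal{V}}(x)
= F_{Q\bar{A}}\!\left(\sum_{i=1}^m \phi_i(x)\, F_{Q\bar{A}}^{-1}(Q\mathcal{V}^{(i)})\right)
= F_{Q\bar{A}}\!\left(\sum_{i=1}^m \phi_i(x)\, F_{\bar{A}}^{-1}(\mathcal{V}^{(i)})\right)
= Q \cdot F_{\bar{A}}\!\left(\sum_{i=1}^m \phi_i(x)\, F_{\bar{A}}^{-1}(\mathcal{V}^{(i)})\right)
= Q\,\mathcal{I}_{\bar{A}}\mathcal{V}(x),
\]
which is the claimed equality.

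I do not expect any genuine obstacle here: the whole point of the abstract construction in Section~\ref{sec:interp_generalizations} is that this equivariance is built in, and the Grassmannian merely inherits it. The only subtlety worth flagging is that the argument requires each $\mathcal{V}^{(i)}$ to lie in the range of $F_{\bar{A}}$ (equivalently, each $Q\mathcal{V}^{(i)}$ in the range of $F_{Q\bar{A}}$), so that the inverse maps are defined; this is precisely the hypothesis already tacitly assumed whenever $\mathcal{I}_{\bar{A}}\mathcal{V}(x)$ is well defined. As a sanity check one could also read the conclusion off Algorithm~\ref{alg_grass} directly, noting that under $\bar{A}_1 \mapsto Q\bar{A}_1$ and $A_1 \mapsto QA_1$ the quantities $\bar{A}_1^T A_1$ and $(I-\bar{A}_1\bar{A}_1^T)A_1$ transform so that $U \mapsto QU$ while $\Sigma$ and $V$ are unchanged, whence $Z \mapsto QZ$ and finally the matrix $A$ produced in Line~\ref{alg_grass_A} transforms to $QA$, whose columns span $Q\,\mathcal{I}_{\bar{A}}\mathcal{V}(x)$.
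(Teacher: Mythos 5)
Your proof is correct and follows the paper's own route, which is simply ``Apply~(\ref{general_equivariance})''; you merely unpack that identity (via $F_{Q\bar{A}} = Q\cdot F_{\bar{A}}$) and add an algorithmic sanity check, both of which are consistent with, and amplify, the paper's one-line argument.
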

\begin{proof}
Apply~(\ref{general_equivariance}).
\end{proof}
  
Another $O(n)$-equivariant interpolant on $Gr(p,n)$ is given abstractly by~(\ref{Riemannianmean2}).  In this setting, this interpolant is obtained by solving
\[
%\bar{A}(x) \cdot \eta = \mathcal{I}_{\bar{A}(x)} \mathcal{V}(x).
\sum_{i=1}^m \phi_i(x) F_{\bar{A}}^{-1}(\mathcal{V}^{(i)}) = 0
\]
for $\bar{A}$ and outputting the space spanned by the first $p$ columns of $\bar{A}$.  Algorithmically, this amounts to wrapping a fixed point iteration around Algorithm~\ref{alg_grass}, as detailed below.

\begin{algorithm}[H] 
\caption{Iterative interpolation on the Grassmannian $Gr(p,n)$} \label{alg_grass_Riemannian}
\begin{algorithmic}[1]
\Require Subspaces $\{\mathcal{V}^{(i)} \in Gr(p,n)\}_{i=1}^m$, shape functions $\{\phi_i : \Omega \rightarrow \mathbb{R} \}_{i=1}^m$, point $x \in \Omega$, matrix~$\bar{A}_1 \in \mathbb{R}^{n \times p}$ with orthonormal columns
\Repeat
\State \begin{varwidth}[t]{\linewidth} Use Algorithm~\ref{alg_grass} to compute the interpolant of $\{\mathcal{V}^{(i)}\}_{i=1}^m$ at $x$, storing the result as a matrix $A \in \mathbb{R}^{n \times p}$ (i.e., the matrix $A$ appearing in line~\ref{alg_grass_A} of Algorithm~\ref{alg_grass}).
\end{varwidth}
\State $\bar{A}_1 \gets A$
\Until{converged}
\State\Return $\mathrm{span}(a_1,a_2,\dots,a_p)$, where $a_j$ denotes the $j^{th}$ column of $\bar{A}_1$.
\end{algorithmic}
\end{algorithm} 

Since $O(n)$ is compact, Lemma~\ref{lemma:Riemannian_mean} shows that Algorithm~\ref{alg_grass_Riemannian} produces the weighted Riemannian mean on $Gr(p,n)$.  This interpolant has been considered previously by several authors, including~\cite{begelfor2006affine,chang2012feature,grohs2013quasi}.

\section{Conclusion}

This paper has presented a family of structure-preserving interpolation operators for functions taking values in a symmetric space $\mathcal{S}$.  We accomplished this by identifying $\mathcal{S}$ with a homogeneous space $G/G^\sigma$ and interpolating coset representatives obtained from the generalized polar decomposition.  The resulting interpolation operators enjoy equivariance with respect to the action of $G^\sigma$ on $\mathcal{S}$, as well as equivariance with respect to the action of certain geodesic symmetries on $\mathcal{S}$.  Numerical evidence in Section~\ref{sec:lorentz} suggests that these interpolation operators also enjoy optimal approximation properties, but further work is needed to confirm this theoretically.  In certain cases, namely those addressed in Lemma~\ref{lemma:Riemannian_mean}, the work of~\cite{grohs2015optimal,grohs2013quasi} supplies the needed theoretical confirmation.  Presuming that similar results hold more generally, the application of these interpolation schemes seems intriguing, particularly in the context of numerical relativity, where they provide structure-preserving finite elements for the metric tensor.

\section{Acknowledgements}

EG has been supported in part by NSF under grants DMS-1411792, DMS-1345013. ML has been supported in part by NSF under grants DMS-1010687, CMMI-1029445, DMS-1065972, CMMI-1334759, DMS-1411792, DMS-1345013.

\begin{appendices}

\section{Second-Order Derivatives of the Matrix Exponential} \label{app:d2exp}

In this section, we prove~(\ref{d2Iu}) by showing that if $P : \Omega \rightarrow \mathbb{R}^{n \times n}$ is a smooth matrix-valued function defined on a domain $\Omega \subset \mathbb{R}^d$, then, for each $j,k=1,2,\dots,d$, the matrix $\frac{\partial^2}{\partial x_j \partial x_k} \exp(P(x))$ is given by reading off the $(1,4)$ block of 
\[
\mathrm{exp}\begin{pmatrix} X & Y & Z & W \\ 0 & X & 0 & Z \\ 0 & 0 & X & Y \\ 0 & 0 & 0 & X \end{pmatrix},
\]
where $X = P(x)$, $Y = \frac{\partial P}{\partial x_j}(x)$, $Z = \frac{\partial P}{\partial x_k}(x)$, and $W = \frac{\partial^2 P}{\partial x_j \partial x_k}(x)$.  To prove this, recall first the identity~(\ref{blockexp}), which can be written as
\begin{equation} \label{blockexp2}
\left.\frac{d}{d t}\right|_{t=0} \exp(U+tV) = \left[ \exp \begin{pmatrix} U & V \\ 0 & U \end{pmatrix} \right]_{(1,2)}
\end{equation}
for any square matrices $U$ and $V$ of equal size, where $B_{(1,2)}$ denotes the $(1,2)$ block of a block matrix $B$.  Now observe that with $X$, $Y$, $Z$, and $W$ defined as above,
\begin{align*}
\frac{\partial^2}{\partial x_j \partial x_k} \exp(P(x)) 
&= \left.\frac{\partial^2}{\partial s \, \partial t}\right|_{s=t=0} \exp(X+tY+sZ+stW) \\
&= \left.\frac{\partial}{\partial s}\right|_{s=0} \left.\frac{\partial}{\partial t}\right|_{t=0}  \exp(X+sZ+t(Y+sW)) \\
%&= \left.\frac{\partial}{\partial s}\right|_{s=0} \mathrm{dexp}_{X+sZ} (Y+sW) \\
&= \left.\frac{\partial}{\partial s}\right|_{s=0} \left[\exp\begin{pmatrix} X+sZ & Y+sW \\ 0 & X+sZ \end{pmatrix} \right]_{(1,2)} \\
&= \left[ \left.\frac{\partial}{\partial s}\right|_{s=0} \exp\begin{pmatrix} X+sZ & Y+sW \\ 0 & X+sZ \end{pmatrix} \right]_{(1,2)} 
%&= \mathrm{dexp}_{\begin{pmatrix} X & Y \\ 0 & X \end{pmatrix}} \begin{pmatrix} Z & W \\ 0 & Z \end{pmatrix}
\end{align*}
Using~(\ref{blockexp2}) again, we have
\[
\left.\frac{\partial}{\partial s}\right|_{s=0} \exp\begin{pmatrix} X+sZ & Y+sW \\ 0 & X+sZ \end{pmatrix} = \left[\exp\begin{pmatrix} X & Y & Z & W \\ 0 & X & 0 & Z \\ 0 & 0 & X & Y \\ 0 & 0 & 0 & X \end{pmatrix}\right]_{(1,2)},
\]
which shows that
\[
%\frac{\partial^2}{\partial x_j \partial x_k} \exp(P(x)) = \left[\exp\begin{pmatrix} X & Y & Z & W \\ 0 & X & 0 & Z \\ 0 & 0 & X & Y \\ 0 & 0 & 0 & X \end{pmatrix}\right]_{(1,4)}.
\frac{\partial^2}{\partial x_j \partial x_k} \exp(P(x)) = \left[ \left[\exp\begin{pmatrix} X & Y & Z & W \\ 0 & X & 0 & Z \\ 0 & 0 & X & Y \\ 0 & 0 & 0 & X \end{pmatrix}\right]_{(1,2)} \right]_{(1,2)} = \left[\exp\begin{pmatrix} X & Y & Z & W \\ 0 & X & 0 & Z \\ 0 & 0 & X & Y \\ 0 & 0 & 0 & X \end{pmatrix}\right]_{(1,4)}.
\]

\end{appendices}

\printbibliography

\end{document}